\documentclass[12pt, a4paper]{article}

\usepackage{latexsym}
\usepackage{booktabs}
\usepackage{caption}
\usepackage{mathrsfs}
\usepackage{amsmath}
\usepackage{amsfonts,amsthm,amssymb,mathrsfs,bbding}
\usepackage{float}
\usepackage{graphicx}
\usepackage{enumerate}
\usepackage{tikz}
\usepackage{caption}
\usepackage{rotating}
\allowdisplaybreaks[4]
\usepackage{tabularx}
\usepackage{cite}
\newtheorem{thm}{Theorem}[section]
\newtheorem{lem}[thm]{Lemma}

\renewcommand\proofname{\bf Proof}
\addtocounter{section}{0}

\theoremstyle{definition}

\newtheorem{const}{Construction}[section]
\newtheorem{prob}{Problem}[section]

\renewcommand\proofname{\bf Proof}

\usepackage{url}

\usepackage[unicode={true},colorlinks,
linkcolor=black,
anchorcolor=blue,
citecolor=green]{hyperref}

\usepackage{authblk}
\usepackage{color}

\definecolor{VeryLightBlue}{rgb}{0.9,0.9,1}
\definecolor{LightBlue}{rgb}{0.8,0.8,1}
\definecolor{MidBlue}{rgb}{0.5,0.5,1}
\definecolor{DarkBlue}{rgb}{0,0,0.6}
\definecolor{Blue}{rgb}{0,0,1}
\definecolor{Gold}{rgb}{1,0.843,0}
\definecolor{LightGreen}{rgb}{0.88,1,0.88}
\definecolor{MidGreen}{rgb}{0.6,1,0.6}
\definecolor{DarkGreen}{rgb}{0,0.6,0}
\definecolor{VeryLightYellow}{rgb}{1,1,0.9}
\definecolor{LightYellow}{rgb}{1,1,0.6}
\definecolor{MidYellow}{rgb}{1,1,0.5}
\definecolor{DarkYellow}{rgb}{1,1,0.2}
\definecolor{DarkPurple}{rgb}{.6,0,1}
\definecolor{Red}{rgb}{1,0,0}
\definecolor{VeryLightRed}{rgb}{1,0.9,0.9}
\definecolor{LightRed}{rgb}{1,0.8,0.8}
\definecolor{MidRed}{rgb}{1,0.55,0.55}

\long\def\delete#1{}

\usepackage[normalem]{ulem}

\begin{document}

\title{Maximum spectral gap of regular graphs with bounded essential edge-connectivity}

\author[1]{Yu Wang\thanks{Corresponding author.}}
\author[2]{Sanming Zhou}

\affil[1]{\small College of Mathematics and System Science, Xinjiang University, Urumqi 830017, P. R. China}
\affil[2]{\small School of Mathematics and Statistics, The University of Melbourne, Parkville, VIC 3010, Australia}
 
\date{}

\openup 0.5\jot 
\maketitle

\renewcommand{\thefootnote}{\empty}
\footnotetext{E-mail addresses: yuwang980212@163.com (Y. Wang), sanming@unimelb.edu.au (S. Zhou)}

\begin{abstract}
An edge-cut of a graph is said to be essential if its removal results in a graph with at least two non-trivial components. The essential edge-connectivity of a graph $G$ is the minimum cardinality among all essential edge-cuts of $G$. The spectral gap of $G$ is the difference between its largest and second largest eigenvalues. In this paper, we prove that for any integers $t$ and $r$ with $6\leq r\leq t\leq 2r-3$, the maximum spectral gap among all connected $r$-regular graphs with essential edge-connectivity at most $t$ is equal to $\frac{1}{2}(r+7-\sqrt{(r+7)^2-8t-32})$ when $t-r$ is odd and $\frac{1}{2}(r+6-\sqrt{(r+6)^2-8t-32})$ when $t-r$ is even. We construct a family of connected $r$-regular graphs achieving these bounds. 

{\em Key words}: spectral gap, algebraic connectivity, second largest eigenvalue, essential edge-connectivity

{\em AMS Subject Classification (2020)}: 05C50, 05C40

\end{abstract}

\section{Introduction}
\label{sec:int}

In this paper we only consider finite undirected graphs with no loops and no parallel edges only. As usual, we use $V(G)$ and $E(G)$ to denote the vertex and edge sets of a graph $G$, respectively. For subsets $S, T$ of $V(G)$, we use $G[S]$ to denote the subgraph of $G$ induced by $S$, $G - S$ to denote the subgraph of $G$ obtained from $G$ by deleting all vertices in $S$, and $[S, T]$ to denote the set of edges of $G$ with one end-vertex in $S$ and the other end-vertex in $T$. Set $\overline{S} = V(G) \setminus S$ and denote the complement of $G$ by $\overline{G}$. For a subset $X$ of $E(G)$, we use $G - X$ to denote the spanning subgraph of $G$ with edge set $E(G) \setminus X$. We use $\kappa(G)$ and $\kappa'(G)$ to denote the connectivity and edge-connectivity of $G$, respectively. An edge-cut $X$ of $G$ is said to be \emph{essential} if $G - X$ has at least two non-trivial components. The \emph{essential edge-connectivity} of a non-trivial graph $G$, written $\lambda'(G)$, is the minimum cardinality of an essential edge-cut of $G$, whereas the essential edge-connectivity of the trivial graph $K_1$ is defined to be $0$. It follows from this definition that $\kappa'(G)\leq\lambda'(G)$ for any graph $G$. It is well known that $\kappa(G) \leq \kappa'(G) \leq \delta(G)$ for any graph $G$, where $\delta(G)$ is the minimum degree of $G$. However, the inequality $\lambda'(G)\leq \delta(G)$ is not true in general. The essential edge-connectivity of graphs has a wide range of applications, including in the study of spanning trees \cite{X.F.,X.,H-J.}, spanning trails\cite{J.Q.}, connected even factors \cite{J.E}, etc. For more information about the essential edge-connectivity, the reader is referred to \cite{F.L.,N.K}. 

Let $A(G)$ be the adjacency matrix of $G$. The eigenvalues of $A(G)$ are defined as the eigenvalues of $G$. Denote these eigenvalues by $\lambda_1(G) \ge \lambda_2(G) \ge \cdots \ge \lambda_n(G)$, so $\lambda_i(G)$ is the $i$th largest eigenvalue of $G$, where $n=|V(G)|$. The Laplacian matrix of $G$ is defined as $L(G) = D(G) - A(G)$, and its eigenvalues are known as the Laplacian eigenvalues of $G$, where $D(G)$ is the diagonal matrix with diagonal entries the vertex degrees of $G$. Denote these eigenvalues by $\mu_1(G) \le \mu_2(G) \le \cdots \le \mu_n(G)$, so $\mu_i(G)$ is the $i$th smallest Laplacian eigenvalue of $G$. Since $L(G)$ is semi-definite and $L(G) \mathbf{1} = \mathbf{0}$, we have $\mu_1(G) = 0$, where $\mathbf{1}$ and $\mathbf{0}$ are all-one and all-zero vectors, respectively. If $G$ is connected, then $\mu_2(G) > 0$. In 1973, Fiedler \cite{fiedler01} proved that $\mu_2(G) \leq \kappa(G)$ for any non-complete graph $G$, and he called $\mu_2(G)$ the \emph{algebraic connectivity} of $G$. In the case when $G$ is $r$-regular, where $r$ is a non-negative integer, we have $\lambda_1(G) = r$ and $\mu_i(G) = r - \lambda_i(G)$ for all $1 \le i \le n$, and so the algebraic connectivity $\mu_2(G)$ is the \emph{spectral gap} $r - \lambda_2(G)$ of $G$.  

It is widely known that the spectral gap of a graph plays an important role in many applications. For example, it determines how fast a random walk on a graph converges to its stationary distribution. The expansion of a graph $G$ can be measured by its \emph{isoperimetric number} of $G$, which is defined as $h(G) = \min\left\{\frac{|[S, \overline{S}]|}{|S|}: \emptyset \ne S \subset V(G), |S| \le \frac{|V(G)|}{2}\right\}$. It is well known that the isoperimetric number of a connected $r$-regular graph $G$ is determined by its spectral gap owing to the following inequalities (\cite{AM,M}): $\frac{r-\lambda_{2}(G)}{2} \leq h(G) \leq \sqrt{2r(r-\lambda_{2}(G))}$. Thus it is of great importance to study the second largest eigenvalue of connected regular graphs, especially in the context of expanders (see \cite{hoory90}), where, roughly speaking, an expander is a graph with small degree and large isoperimetric number. Because of the importance of the spectral gap, it is natural to study the following extremal problems. 

\begin{prob}
\label{prob}
Determine the maximum spectral gap of graphs in a given family of $r$-regular graphs. 
\end{prob}

\begin{prob}
\label{prob1}
Determine the minimum spectral gap of graphs in a given family of $r$-regular graphs.  
\end{prob}

Of course, Problem \ref{prob} (Problem \ref{prob1}, respectively) is equivalent to the problem of determining the minimum (maximum, respectively) second largest eigenvalue of graphs in a given family of $r$-regular graphs. A related problem is to derive the best lower bound (upper bound, respectively) for the second largest eigenvalue of graphs in the given family.

Problem \ref{prob1} has been studies extensively for various families of regular graphs. For example, Aldous and Fill \cite[p.217]{Aldous02} conjectured that the maximum relaxation time for the random walk on a connected regular graph with $n$ vertices is $(1+o(1)) \frac{3n^2}{2\pi^2}$. As seen in \cite[Conjecture 1.2]{Abdi02} and \cite[Conjecture 1.3]{Abdi03}, this conjecture can be rephrased as follows: The spectral gap of an $r$-regular graph with $n$ vertices is at least $(1+o(1)) \frac{2r\pi^2}{3n^2}$, and this bound is attained for at least one value of $r$. This conjecture has been proved for cubic graphs by Abdi, Ghorbani and Imrich (see \cite[Theorem 2.2]{Abdi03}) and for $r$-regular graphs with diameter $\frac{3n}{r+1}+ O(1)$ by Abdi and Ghorbani (see \cite[Theorem 1.8]{Abdi02}).  

Problem \ref{prob} has been studied for several families of regular graphs. For example, for given integers $t$ and $r$ with $1 \leq t \leq r-1$, Abiad \textit{et al.} \cite{Abiad01} studied the problem of finding the best upper bound for $\lambda_2(G)$ that guarantees $\kappa'(G) \geq t+1$ (or $\kappa(G) \geq t+1$) among all $r$-regular graphs. Equivalently, this asks for the best upper bound on the spectral gap of the graphs in the family of $r$-regular graphs with $\kappa'(G) \le t$ (or $\kappa(G) \leq t$). In 2002, Kirkland \textit{et al.} \cite{kirkland} gave a characterization of graphs with algebraic connectivity achieving $\kappa(G)$. In 2020, O \cite{O10} extended Fiedler's bound $\mu_2(G) \leq \kappa(G)$ to multigraphs. In 2004, Chandran \cite{Chandran01} showed that if $G$ is an $r$-regular graph with order $n$ and $\kappa'(G) \leq r-1$, then $\lambda_2(G) \geq r - 1 - \frac{r}{n-r}$. In 2006, Krivelevich and Sudakov \cite{krivelevich} slightly improved this result as follows: If $G$ is an $r$-regular graph with $\kappa'(G) \leq r-1$, then $\lambda_2(G) > r - 2$. In 2010, Cioab\u{a} \cite{cioaba10} proved that if $G$ is an $r$-regular graph with $\kappa'(G) \leq t$, then $\lambda_2(G) \geq r - \frac{2t}{r+1}$. He proved further that this result can be strengthened as follows when $t$ is $1$ or $2$: If $r\geq 3$ is odd and $G$ is an $r$-regular graph with $\kappa'(G) \leq 1$, then $\lambda_2(G) \geq \pi(r)$, where $\pi(r)$ is the largest root of the equation $x^3 - (r - 3)x^2 - (3r - 2)x - 2 = 0$; if $r\geq 3$ and $G$ is an $r$-regular graph with $\kappa'(G) \leq 2$, then $\lambda_2(G) \geq \frac{r-3+\sqrt{(r+3)^2-16}}{2}$. In 2023, O \textit{et al.} \cite{O90} proved that if $3 \leq t \leq r-1$ and $G$ is an $r$-regular graph with $\kappa'(G) \leq t$, then $\lambda_2(G) \geq \tau(r, t)$, where
\begin{equation}
	\label{tau}
\tau(r, t) = \begin{cases} 
	\frac{r-4+\sqrt{(r+4)^2-8t}}{2}, & \text{when } t \text{ is odd,} \\
	\frac{r-3+\sqrt{(r+3)^2-8t}}{2}, & \text{when } t \text{ is even.}
\end{cases}
\end{equation}
The reader is referred to \cite{deAbreu01,mohar01,rad01,Wu01} for more results on the algebraic connectivity of graphs. 

In this paper we focus on Problem \ref{prob} for a particular family of regular graphs. Inspired by the work in \cite{O90}, we resolve Problem \ref{prob} for the family of $r$-regular graphs $G$ with bounded essential edge-connectivity, where $r \ge 6$. Observe that if $1\leq \lambda'(G) \leq r-1$, then $\lambda_2(G) \ge \tau(r,t)$, where $\tau(r,t)$ is defined in \eqref{tau}. Hence it remains to consider the case when $\lambda'(G)\geq r$. Therefore, in our main result, stated below, we can assume $t \ge r$.

\begin{thm}\label{thm::1.4}
Let $t$ and $r$ be integers with $6 \leq r \leq t\leq 2r-3$, and let 
\begin{equation}
	\label{rho}
\rho(r, t) = \begin{cases} 
\frac{r-7+\sqrt{(r+7)^2-8t-32}}{2}, & \text{when } t-r \text{ is odd,} \\
\frac{r-6+\sqrt{(r+6)^2-8t-32}}{2}, & \text{when } t-r \text{ is even.}
\end{cases}
\end{equation}
Then for any connected $r$-regular graph $G$ with $\lambda'(G) \leq t$, we have $\lambda_2(G) \geq \rho(r, t)$. Moreover, this bound is sharp. 
\end{thm}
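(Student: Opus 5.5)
The proof will follow the strategy of O et al.\ for $\kappa'(G)\le t$, adapted to essential edge-cuts. Let $G$ be a connected $r$-regular graph with $\lambda'(G)\le t$, and let $X=[S,\overline S]$ be a minimum essential edge-cut, so $|X|=k\le t$. Both $G[S]$ and $G[\overline S]$ contain a non-trivial component. A minimum cut gives $G[S]$ and $G[\overline S]$ connected. Write $s=|S|$ and $e(S)$ for the number of edges inside $S$, so $rs-2e(S)=k$.

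The first step is to show that each side is reasonably large. If $s\le r$ then $k=rs-2e(S)\ge rs-s(s-1)=s(r-s+1)$. For $2\le s\le r-1$ this is at least $2(r-1)>2r-3\ge t$, a contradiction. Hence $s\ge r$ (and likewise $|\overline S|\ge r$), apart from degenerate cases such as $s=r$ or $s=r+1$ with $G[S]$ nearly complete. The parity relation $rs\equiv k \pmod 2$ is what will separate the two cases ``$t-r$ odd/even''.

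The second step is the spectral lower bound via interlacing. By Cauchy interlacing (or the standard lemma that if $G$ has two vertex-disjoint subsets with no edges between them, then $\lambda_2(G)\ge\min\{\lambda_1(G[S]),\lambda_1(G[\overline S])\}$), it suffices to show $\lambda_1(G[S])\ge\rho(r,t)$ for each side. For this I will use the quotient matrix of an equitable-like partition of $S$ into $S_1$ (vertices incident to $X$) and $S_2=S\setminus S_1$. The minimal configuration should be $S$ of size about $r+1$ or $r+2$ with $G[S]$ equal to $K_{r+2}$ (or $K_{r+1}$) minus a near-perfect matching, or a similar structure, with $k$ cut edges spread so that many vertices lose one or two edges. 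Solving $\det\begin{pmatrix} a-x & b\\ c & d-x\end{pmatrix}=0$ for the worst quotient should give the quadratic $x^2-(r-7)x-\ldots$, whose larger root is $\rho(r,t)$; the constants $r+7$, $r+6$ and $8t+32$ indicate $|S|=r+3$ or $r+2$ respectively. Concretely, I will take the average-degree bound $\lambda_1(G[S])\ge 2e(S)/s=r-k/s$ when $s$ is large (for $s\ge r+4$ this already exceeds $\rho$, which should be checked), and handle $s\in\{r,\dots,r+3\}$ by a two-part quotient-matrix computation. In each such $s$, the degree deficiencies sum to $k$, and the Rayleigh quotient with a vector taking value $1$ on vertices of small deficiency and $y$ on the rest is optimized.

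The third step is sharpness. I will construct $H$ on $r+3$ vertices (for $t-r$ odd) or $r+2$ vertices (for $t-r$ even) with degrees $r$ or $r-1$/$r-2$ and total deficiency $t$. I then take two copies of $H$ joined by $t$ edges (or by a suitable connecting gadget when needed, for instance more copies arranged so that $\lambda_2$ comes from the antisymmetric eigenvector). Computing the equitable quotient with the antisymmetric vector gives exactly $\rho(r,t)$, and I then check that no other eigenvalue exceeds it and that $\lambda'(G)=t$.

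The main obstacle will be the small-$s$ case analysis in the second step: showing that among all $S$ with $r\le s\le r+3$ and cut size $k\le t$, the minimum of $\lambda_1(G[S])$ is attained by the described near-complete graph, especially when the deficiencies are unevenly distributed. I would try first to bound $\lambda_1$ with the two-class test vector and use monotonicity in $k$ to reduce to $k=t$ or $k=t-1$ according to parity.
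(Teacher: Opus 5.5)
There is a genuine gap in your second step. The inequality $\lambda_2(G)\ge\min\{\lambda_1(G[S]),\lambda_1(G[\overline S])\}$ comes from Cauchy interlacing applied to the induced subgraph on $S\cup\overline S$. It requires that subgraph to be the disjoint union $G[S]\cup G[\overline S]$, i.e.\ no edges between the two sets. Here $S$ and $\overline S$ are joined by the $k$ cut edges, and the inequality genuinely fails. For $r=t=6$ the extremal graph is two copies of $K_6$ joined by a perfect matching, with spectrum $6,4,0^{(5)},(-2)^{(5)}$. So $\lambda_2(G)=4=\rho(6,6)$ while $\lambda_1(G[S])=5$. Hence even a complete proof that $\lambda_1(G[S])\ge\rho(r,t)$ would not give the theorem, and the small-$s$ analysis you call the main obstacle targets the wrong quantity.

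The number $\rho(r,t)$ is not a bound for $\lambda_1$ of one side. It is the top eigenvalue of a mode that changes sign across the cut, so the cut edges contribute negatively. For $t-r$ even, for instance, it is the larger eigenvalue of $\bigl(\begin{smallmatrix} t-r-4 & 2r-t\\ t-r & 2r-t-2\end{smallmatrix}\bigr)$. The missing idea is to use a test vector orthogonal to $\mathbf{1}$ that lives on all of $V(G)$, positive on $S$ and negative on $\overline S$ (your two-class vector extended with the opposite sign). Equivalently, interlace with the quotient matrix of a partition refining $\{S,\overline S\}$.

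This is how the paper proceeds. The quotient of $\{S,\overline S\}$ gives $\lambda_2(G)\ge r-\frac{k}{s}-\frac{k}{s'}$ with $s'=|\overline S|$. That disposes of $k\le t-1$ and of any side larger than the minimum permitted by counting and parity. Each side is then split into the endpoints of cut edges that are isolated in the cut (both ends meet no other cut edge) and the remaining vertices. The value $\rho$ is read off the block $A-B$ of the resulting $4\times4$ quotient.

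The same two-part bound shows that your predicted extremal structure, and hence your sharpness construction, is wrong. Suppose both sides of a cut with $t$ edges have at least $r+2$ vertices. Then $\lambda_2(G)\ge r-\frac{2t}{r+2}$, which exceeds both $r-\frac{t}{r}-\frac{t}{r+1}$ and $r-\frac{t}{r+1}-\frac{t}{r+2}$. These in turn exceed $\rho(r,t)$ in the even and odd cases respectively (the paper's Claims 2 and 1). So two copies of a graph on $r+2$ or $r+3$ vertices joined by $t$ edges can never attain $\rho$; the constants $r+6$ and $r+7$ in $\rho$ do not encode $|S|$.

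The extremal graphs have sides of the smallest admissible size: $r$ when $t-r$ is even and $r+1$ when $t-r$ is odd. They are $\overline{\tfrac{t-r}{2}K_2}\vee K_{2r-t}$ and $\overline{C_{t-r-1}}\vee\overline{\tfrac{2r-t+2}{2}K_2}$ respectively, joined across as in the paper's construction. In them, $\rho$ is attained by an eigenvector that is antisymmetric between the two halves.
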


Equivalently, Theorem \ref{thm::1.4} says that for $6 \leq r \leq t$ the maximum spectral gap among all connected $r$-regular graphs with essential edge-connectivity at most $t$ is equal to $\frac{r+7-\sqrt{(r+7)^2-8t-32}}{2}$ when $t-r$ is odd and $\frac{r+6-\sqrt{(r+6)^2-8t-32}}{2}$ when $t-r$ is even.

In the next section we will construct a family of connected $r$-regular graphs $G$ satisfying $\lambda'(G) = t$ and $\lambda_2(G) = \rho(r, t)$. The proof of Theorem \ref{thm::1.4} will be given in Section \ref{sec:pf}.

\section{Construction of extremal graphs}
\label{sec:const}

In this section, we construct a family of connected $r$-regular graphs $G_{r,t}$ such that $\lambda'(G_{r,t}) = t$ and $\lambda_2(G_{r,t}) = \rho(r,t)$, for any integers $r$ and $t$ with $6 \leq r \leq t \leq 2r - 3$ such that $r$ is odd when $t$ is odd. Here the parity condition on $r$ and $t$ is necessary because, by the handshaking lemma, if the essential edge-connectivity of a connected $r$-regular graph is odd, then $r$ must be odd. We begin with the following two lemmas on the essential edge-connectivity of regular graphs.

\begin{lem}\label{pro::2.1}
If $G$ is a connected $r$-regular graph, where $r \ge 1$, then $\lambda'(G)\leq 2r-2$. Moreover, this bound is achieved by $K_{r+1}$.
\end{lem}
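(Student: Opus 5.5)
Take any edge $uv$ of $G$ and let $X$ be the set of edges with exactly one end in $\{u,v\}$. Since $G$ is simple and $r$-regular, each of $u,v$ has $r-1$ neighbours besides the other, so $|X|\le 2r-2$. We will show that $X$ is an essential edge-cut whenever $G\neq K_{r+1}$, and handle $K_{r+1}$ separately, where we also check that the bound is attained.

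First, the degenerate cases. If $r=1$ then $G=K_2$, which has no essential edge-cut at all, so we need the convention for this case; we read it as $\lambda'(G)=0\le 2r-2$. In general, if $|V(G)|\le 3$ there is no essential edge-cut, and the same convention applies. So assume $r\ge 2$ and that $G$ has at least four vertices.

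Now suppose $G-\{u,v\}$ contains an edge $xy$. Then $G[\{u,v\}]$ is a non-trivial component of $G-X$, and $xy$ lies in some non-trivial component of $G-X$ disjoint from $\{u,v\}$. Hence $X$ is an essential edge-cut and $\lambda'(G)\le |X|\le 2r-2$.

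It remains to find an edge $uv$ for which $G-\{u,v\}$ is not edgeless. Suppose instead that $G-\{u,v\}$ is edgeless for every edge $uv$. Then every edge of $G$ meets every other edge, so the edges pairwise intersect. A graph with pairwise intersecting edges is either a star or a triangle. An $r$-regular star forces $r=1$, and a triangle is $K_3$; both cases are already excluded. So such an edge exists, and the inequality holds for every connected $r$-regular $G$.

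For sharpness, take $G=K_{r+1}$ with $r\ge 3$, so there are at least four vertices. An essential cut separates $V(G)$ into $S$ and $\overline S$ with $|S|=k$ and $2\le k\le r-1$, since both sides must contain a non-trivial component. The cut then contains all $k(r+1-k)$ edges between the two sides. This quantity is concave in $k$, so its minimum over $2\le k\le r-1$ occurs at the endpoints, where it equals $2(r-1)$. Hence $\lambda'(K_{r+1})=2r-2$.

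The only delicate point is the small cases $r\le 2$ (for instance $K_2$ and $K_3$), which have no essential edge-cut and depend on the convention adopted. Apart from that, the main step is the combinatorial fact that pairwise intersecting edges form a star or a triangle.
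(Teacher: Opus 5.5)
Your proof is correct, and its core is the same as the paper's. Both use the fact that the edges leaving the two ends of a single edge $uv$ form a cut of exactly $2r-2$ edges, and this cut is essential as soon as $G-\{u,v\}$ still contains an edge.

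The two arguments differ in how they find a second non-trivial component.

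\begin{itemize}
\item The paper argues by contradiction. It starts from a minimum essential edge-cut $X$ with non-trivial components $G_1,G_2$, takes the edge inside $G_1$, and uses $G_2$ as the witness. This is shorter, but it tacitly assumes that $G$ has an essential edge-cut at all.
\item You prove directly, via ``pairwise intersecting edges form a star or a triangle'', that every connected $r$-regular graph other than $K_2$ and $K_3$ has an edge $uv$ with $G-\{u,v\}$ not edgeless. This establishes that an essential edge-cut exists, and it pins down exactly the two graphs where the statement depends on a convention.
\end{itemize}

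The paper's definition only fixes $\lambda'(K_1)=0$, so it gives no value in those two cases. Your reading $\lambda'=0$ there conflicts with the paper's remark that $\kappa'(G)\le\lambda'(G)$ for every graph. Under the alternative reading $\lambda'=\infty$, the lemma is simply false for $K_2$ and $K_3$. Either way, the substantive content is the case $|V(G)|\ge 4$, which you cover fully.

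Your sharpness check is also more careful than the paper's bare assertion. The equality $\lambda'(K_{r+1})=2r-2$ only holds for $r\ge 3$, because $K_3$ has no essential edge-cut. For $r=2$ the bound $2$ is attained instead by any $C_n$ with $n\ge 4$.

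A cosmetic point: $|X|$ is exactly $2r-2$, not merely at most $2r-2$.
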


\begin{proof}
Suppose to the contrary that $\lambda'(G) \ge 2r-1$. Let $X$ be a minimum essential edge-cut of $G$, and let $G_1$ and $G_2$ be two non-trivial components of $G-X$. Since $G$ is connected and $r$-regular, we have $|V(G_1)|\ge 3$ and $|V(G_2)|\ge 3$.

Choose two adjacent vertices $x, y \in V(G_1)$. Let $Y$ be the set of edges of $G$ between $\{x, y\}$ and $V(G)\setminus\{x, y\}$. Then $Y$ is an essential edge-cut of $G$ because $\{x,y\}$ induces a connected subgraph and $V(G)\setminus\{x,y\}$ contains the non-empty set $V(G_2)$. Since $G$ is $r$-regular, we have $|Y| = 2r-2 < 2r-1 \le \lambda'(G)$, which contradicts the minimality of $\lambda'(G)$. Therefore, $\lambda'(G) \le 2r-2$.
This bound is tight as $\lambda'(K_{r+1}) = 2r-2$.
\end{proof}

In our study of the maximum spectral gap, we only consider connected $r$-regular graphs $G$ with $\lambda'(G)\leq 2r-3$.

\begin{lem}\label{pro::2.2}
Let $G$ be a connected $r$-regular graph with $r \leq \lambda'(G) \leq 2r-3$, and let $S\subseteq V(G)$ be such that $|[S, \overline{S}]| = \lambda'(G)$. Then each of $S$ and $\overline{S}$ has at least $r$ vertices. Furthermore, if $\lambda'(G)-r$ is odd, then each of $S$ and $\overline{S}$ has at least $r + 1$ vertices.
\end{lem}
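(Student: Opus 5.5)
We count edges inside $S$. Write $s=|S|$ and $\lambda'=\lambda'(G)=|[S,\overline S]|$. By symmetry it suffices to treat $S$. The handshaking lemma gives
\[
2|E(G[S])| = rs - \lambda' .
\]
Since $G[S]$ is simple, we also have $|E(G[S])|\le \binom{s}{2}$. Combining the two, we get
\[
rs-\lambda'\le s(s-1), \qquad\text{that is,}\qquad \lambda'\ge s(r-s+1).
\]

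\textbf{Step 1 (small $S$ is impossible).} The function $f(s)=s(r-s+1)$ is concave in $s$. Hence for $2\le s\le r-1$ its minimum is attained at an endpoint, and both endpoints give $f(2)=f(r-1)=2(r-1)=2r-2$. So for $2\le s\le r-1$ we get
\[
\lambda'\ge 2r-2 > 2r-3,
\]
contradicting the hypothesis $\lambda'\le 2r-3$.

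\textbf{Step 2 (the case $s=1$).} If $s=1$, then $|[S,\overline S]|=r\le\lambda'$, so the counting bound says nothing. Here we must use the essential nature of the cut, and I read the hypothesis this way. The statement presumably intends $[S,\overline S]$ to be a minimum essential edge-cut, i.e.\ $G[S]$ and $G[\overline S]$ are non-trivial, possibly after passing to components of $G-X$. Then $s\ge 2$, and Step 1 gives $s\ge r$.

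I expect the main subtlety to be exactly this. If $S$ is literally any set with cut size $\lambda'$, then a single vertex when $\lambda'=r$ is a counterexample. So I would state that $S$ is taken with $G[S]$ and $G[\overline S]$ containing edges, as in the proof of Lemma \ref{pro::2.1}. Possibly one also needs each side non-trivial in the sense of having at least two vertices, and non-triviality guarantees this.

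\textbf{Step 3 (parity refinement).} Suppose $\lambda'-r$ is odd and $s=r$. The handshake identity gives
\[
2|E(G[S])| = r\cdot r-\lambda' = r^2-\lambda'.
\]
The left side is even, so $r^2\equiv\lambda' \pmod 2$. Since $r^2\equiv r \pmod 2$, this means $\lambda'\equiv r\pmod 2$, which contradicts $\lambda'-r$ being odd. Therefore $s\ne r$, and with Step 1 we get $s\ge r+1$.

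The same argument applies to $\overline S$.
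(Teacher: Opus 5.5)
Your proof is correct and uses the same approach as the paper: the bound $\lambda'(G)\ge |S|(r+1-|S|)\ge 2r-2$ for $|S|\le r-1$, then the parity of $r^2-\lambda'(G)$ when $|S|=r$. Your Step 2 caveat is also right. The paper's inequality $|S|(r+1-|S|)\ge 2r-2$ fails at $|S|=1$, so the first claim is false as literally stated when $\lambda'(G)=r$ and $S$ is a single vertex. One must assume, as you do, that $G[S]$ and $G[\overline{S}]$ each contain an edge, which holds when $[S,\overline{S}]$ is a minimum essential edge-cut. The parity claim is unaffected, since then $\lambda'(G)\neq r$ and a single vertex cannot realize the cut.
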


\begin{proof}
Denote $l = \lambda'(G)$. If $|S| \leq r-1$, then $2r - 3 \geq l \geq |S| (r + 1 - |S|) \geq 2r-2$, which is a contradiction. Hence $|S|\geq r$. Similarly, $|\overline{S}|\geq r$. 

Now we assume that $l-r$ is odd. If $|S| = r$, then $r |S| = r^2 = 2|E(G[S])| + l$,
that is, $r(r-1) = 2|E(G[S])| + (l-r)$, which is a contradiction as the two sides have different parity. Hence $|S|\geq r+1$. Similarly, $|\overline{S}|\geq r+1$. 
\end{proof}

As usual, for two graphs $G$ and $H$ and a positive integer $k$, $G\cup H$ denotes the \emph{union} of $G$ and $H$, and $kG$ denotes the union of $k$ pairwise vertex-disjoint copies of $G$. If $G$ and $H$ are vertex-disjoint, $G\vee H$ denotes the \emph{join} of $G$ and $H$, which is obtained from $G\cup H$ by adding all possible edges between the vertices of $G$ and the vertices of $H$. Recall that $\overline{G}$ denotes the complement of $G$.

\begin{const}
\label{const1}
Let $r$ and $t$ be positive integers with $r \leq t \leq 2r - 3$ such that $r$ is odd when $t$ is odd. Then $t$ is even when $t-r$ is odd. Let
\begin{equation}
\label{Hrt}
H_{r,t} =
\begin{cases}
\overline{C_{t-r-1}} \vee \overline{\frac{2r-t+2}{2}K_{2}}, & \text{if } t-r \text{ is odd}, \\
\overline{\frac{t-r}{2}K_2} \vee K_{2r-t},   & \text{if } t-r \text{ is even}.
\end{cases}
\end{equation}

If $t-r$ is odd, then we define $G_{r,t}$ to be a graph obtained from the union of two vertex-disjoint copies of $H_{r,t}$ by (i) adding $2(t-r-1)$ edges between the two copies of $\overline{C_{t-r-1}}$ such that every vertex in each copy of $\overline{C_{t-r-1}}$ has exactly two neighbors in the other copy of $\overline{C_{t-r-1}}$, and (ii) adding a perfect matching between the two copies of $\overline{\frac{2r-t+2}{2}K_{2}}$.

If $t-r$ is even, then we define $G_{r,t}$ to be a graph obtained from the union of two vertex-disjoint copies of $H_{r,t}$ by (i) adding $2(t-r)$ edges between the two copies of $\overline{\frac{t-r}{2}K_2}$ such that every vertex in each copy of $\overline{\frac{t-r}{2}K_2}$ has exactly two neighbors in the other copy of  $\overline{\frac{t-r}{2}K_2}$, and (ii) adding a perfect matching between the two copies of $K_{2r-t}$.  

It can be easily verified that $G_{r,t}$ is a connected $r$-regular graph regardless of the parity of $t-r$. The order of $G_{r,t}$ is $2(r+1)$ when $t-r$ is odd and $2r$ when $t-r$ is even. 
\end{const}

It is worth noting that the graph $G_{r,t}$ constructed above is not unique. We use ${\mathcal G}_{r,t}$ to denote the family of graphs $G_{r,t}$ defined in Construction \ref{const1}. The following lemma gives the essential edge-connectivity of each $G_{r,t}$. 

\begin{lem}\label{pro::2.3}
Let $r$ and $t$ be integers with $6\leq r \leq t \leq 2r - 3$ such that $r$ is odd when $t$ is odd. Then $\lambda'(G_{r,t}) = t$ for any $G_{r,t} \in {\mathcal G}_{r,t}$.
\end{lem}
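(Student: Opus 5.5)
Write $G = G_{r,t}$ and let $H^{(1)}$, $H^{(2)}$ be the two copies of $H_{r,t}$. The plan is to prove $\lambda'(G)\le t$ and $\lambda'(G)\ge t$ separately.

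\textbf{Upper bound.} Take $S=V(H^{(1)})$. The edges of $[S,\overline{S}]$ are exactly the edges added in steps (i) and (ii). If $t-r$ is odd, there are $2(t-r-1)$ edges from step (i) and $2r-t+2$ edges from the perfect matching in step (ii). If $t-r$ is even, there are $2(t-r)$ edges and $2r-t$ edges respectively. In both cases the total is $t$. Both $G[S]$ and $G[\overline{S}]$ are copies of $H_{r,t}$, which is a join of two nonempty graphs and hence connected and non-trivial. So $[S,\overline{S}]$ is an essential edge-cut, and $\lambda'(G)\le t$.

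\textbf{Lower bound.} Let $S$ be any set such that $G[S]$ and $G[\overline{S}]$ each contain an edge. It suffices to treat such $S$, since a minimum essential cut can be taken of the form $[S,\overline{S}]$ with both sides containing edges. By symmetry assume $|S|\le |\overline{S}|$, and write $k=|S|$ and $n=|V(G)|$, so $k\le n/2$. We have $n=2r+2$ when $t-r$ is odd and $n=2r$ when $t-r$ is even. The basic estimate is
\[
|[S,\overline{S}]| = rk - 2|E(G[S])| \ge rk - k(k-1) = k(r+1-k),
\]
together with refinements that count non-edges inside $H_{r,t}$.

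\emph{Small $S$.} For $2\le k\le r-1$ the estimate gives at least $2r-2>t$, which settles this range.

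\emph{Large $S$.} The remaining cases are $k=r,\ r+1$, and these force $S$ to be nearly half of the graph. This is the main obstacle. I would handle it by a counting argument on non-edges. Every vertex of $H_{r,t}$ has at most two non-neighbours inside its own copy: in $\overline{C_{t-r-1}}$ or $\overline{mK_2}$ it misses two or one vertices, and in $K_{2r-t}$ it misses none. Moreover, $G$ is nearly the disjoint union of two cliques minus a sparse graph.

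Suppose $S$ contains $a$ vertices of $H^{(1)}$ and $b$ vertices of $H^{(2)}$, with $a+b=k$. Then
\[
|[S,\overline{S}]| = a(|H|-a) + b(|H|-b) + (\text{cross edges}) - (\text{missing edges}),
\]
where the cross edges are the added edges of $G$ between $S$ and $\overline{S}$, and the missing edges are the non-edges of $H$ crossing the partition.

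If $S=V(H^{(i)})$, the value is exactly $t$. Otherwise $S$ is mixed, meaning $1\le a,b$ and neither side is all of one copy. In that case $a(|H|-a)+b(|H|-b)$ is of order $r$ times the smaller parts. The number of missing edges is bounded by twice the number of vertices on the smaller side, and it is at most $2\min$ per copy. Hence
\[
|[S,\overline{S}]| \ge \min(a,|H|-a)\,(|H|-\min(a,|H|-a)-2) + (\text{same for } b).
\]
When $1\le \min\le |H|-1$, this is at least about $r-2$ from each copy that is split. If both copies are split, the total is at least $2(r-3)\ge t$ for $r\ge6$ and $t\le 2r-3$; this needs care, checking the boundary cases $\min=1$ and $\min=|H|-1$. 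If only one copy is split, then $S$ is a whole copy plus or minus part of the other copy, so $|S|$ is forced to differ from $|H|$ by a bounded amount, and this contradicts either $k\le n/2$ or the small-$k$ bound.

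I expect the two-copies-split boundary cases, where the margin $2r-6$ versus $t\le 2r-3$ is tight, to require a direct check using the exact structure of $H_{r,t}$. If the generic bound is too weak, I would reduce further using Lemma \ref{pro::2.2}. Applied to a hypothetical minimum cut with $\lambda'(G)<t$ (assuming $\lambda'(G)\ge r$), it forces $|S|\ge r$ or $r+1$, so $|S|=|\overline{S}|=n/2$ exactly. The remaining cases then become balanced bipartitions, which can be checked directly.
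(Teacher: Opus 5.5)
Your reductions are correct. The upper bound holds, and so does the restriction to sets $S$ with an edge on each side. The estimate $k(r+1-k)\ge 2r-2>t$ for $2\le k\le r-1$ then leaves only $|S|\in\{r,r+1\}$ with $|S|\le n/2$. But that residual case is the whole content of the lower bound, and you do not prove it.

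\begin{itemize}
\item The inequality you state for two split copies, $2(r-3)\ge t$, is false as soon as $t\ge 2r-5$. For instance $t=2r-3$ with $r$ odd is allowed and lies in the case $t-r$ even. You defer this to ``a direct check'' without doing it.
\item Your dismissal of the one-split case is wrong. If $t-r$ is odd then $|H_{r,t}|=r+1$, and $S=V(H^{(2)})\setminus\{w\}$ has $|S|=r\le n/2$. So it contradicts neither $k\le n/2$ nor the small-$k$ bound.
\item The fallback through Lemma~\ref{pro::2.2} does not close the gap. It presupposes $\lambda'(G)\ge r$. Also, when $t-r$ is odd but a hypothetical $\lambda'(G)-r$ is even, it only yields $|S|,|\overline{S}|\ge r$, not $|S|=|\overline{S}|$.
\end{itemize}

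The missing ingredient is an exact non-neighbour count; with it your framework goes through. A vertex $v$ with $c(v)\in\{1,2\}$ neighbours in the other copy has exactly $|H_{r,t}|-1-(r-c(v))$ non-neighbours in its own copy. This equals $c(v)\le 2$ when $t-r$ is odd, and $c(v)-1\le 1$ when $t-r$ is even. Hence a split copy whose minority side has $m$ vertices contributes at least $m(r-1-m)\ge r-2$ in both parities, for $1\le m\le |H_{r,t}|/2$ and $r\ge 6$.

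\emph{Case $t-r$ odd.} Here $t$ is even, so $t\le 2r-4$.
\begin{itemize}
\item Two split copies already give at least $2r-4\ge t$.
\item The only one-split set is $S=V(H^{(2)})\setminus\{w\}$. Its cut is $(r-c(w))+(t-c(w))\ge r+t-4>t$.
\end{itemize}

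\emph{Case $t-r$ even.} Here $|S|=r$, so unless $S$ is a copy, both copies are split.
\begin{itemize}
\item If some minority side has at least two vertices, the cut is at least $(r-2)+(2r-6)=3r-8\ge 2r-3$.
\item Otherwise, up to symmetry, $S=\{v\}\cup(V(H^{(2)})\setminus\{w\})$. Its cut is at least $(r-c(v))+(r-c(w))+(t-c(v)-c(w))\ge 2r+t-8>t$.
\end{itemize}

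Completed this way, your route is sturdier than the paper's. The paper's bound \eqref{equ::7} keeps only the join edges and the cross edges. Its claim that the right side is at least $t$ for every essential cut fails: for $S$ a pair of adjacent vertices in $B_1$, the right side is $2(t-r-1)+2<t$. Your $k(r+1-k)$ reduction is exactly what that argument lacks, but it must be followed by the exact count above.
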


\begin{proof}
We only prove the result for odd $t-r$. The proof for even $t-r$ is similar and hence is omitted.

Assume that $t-r$ is odd. Set $a = t-r-1$ and $b = \frac{2r-t+2}{2}$. Then $a$ is even and $b \geq 3$ as $t \leq 2r-3$. Since $t-r$ is odd, the graph $H_{r,t}$ defined in \eqref{Hrt} is $H_{r,t} = \overline{C_{a}} \vee \overline{bK_{2}}$. Take two vertex-disjoint copies of $H_{r,t}$. Call them $H_{r,t}^1$ and $H_{r,t}^2$. Denote by $A_i$ and $B_i$ the vertex sets of $C_{a}$ and $\overline{bK_{2}}$ in $H_{r,t}^i$, respectively, for $i=1,2$. Recall from Construction \ref{const1} that $G_{r,t}$ is obtained from $H_{r,t}^1 \cup H_{r,t}^2$ by adding the $2a$ edges of a $2$-regular bipartite graph with bipartition $\{A_1, A_2\}$ and a perfect matching between $B_1$ and $B_2$. Note that $V(H_{r,t}^i) = A_i \cup B_i$ for $i=1,2$ and $V(G_{r,t}) = \cup_{i=1}^{2} (A_i \cup B_i)$. Note also that there are exactly $2a+2b=t$ edges of $G_{r,t}$ between $H_{r,t}^1$ and $H_{r,t}^2$. These $t$ edges form an essential edge-cut of $G_{r,t}$ as their removal disconnects $G_{r,t}$ into two nontrivial components. Thus, $\lambda'(G_{r,t}) \leq t$.

It remains to prove that no essential edge-cut of $G_{r,t}$ with size smaller than $t$ exists. Suppose for a contradiction that there exists an essential edge-cut $X$ of $G_{r,t}$ such that $|X| < t$. Then $G_{r,t}-X$ has at least two nontrivial components.
Let $S$ be the vertex set of one of these components and write $S_i = S \cap V(H_{r,t}^i)$ for $i=1,2$. Set $x_i = |S_i \cap A_i|$ and $y_i = |S_i \cap B_i|$. Then $0 \leq x_i \leq a$ and $0 \leq y_i \leq 2b$ for $i=1,2$.

Inside $H_{r,t}^i$ we have
\[
|[S_i, V(H_{r,t}^i) \setminus S_i]| \geq 2b x_i + a y_i - 2x_i y_i .
\]
Regarding the edges between $H_{r,t}^1$ and $H_{r,t}^2$, for $i=1,2$, each vertex in $A_i$ contributes exactly two edges incident to some vertices in $A_{3-i}$, and each vertex in $B_i$ contributes exactly one edge incident to a vertex in $B_{3-i}$. Therefore, the number of edges leaving $S$ is at least $2|x_1 - x_2| + |y_1 - y_2|$. Hence
\begin{align}
|[S,V(G_{r,t})\setminus S]| \geq \sum_{i=1}^{2} \bigl(2b x_i + a y_i - 2x_i y_i\bigr) 
+ 2|x_1 - x_2| + |y_1 - y_2|. \label{equ::7}
\end{align}
Since the right-hand side of (\ref{equ::7}) is bilinear in each variable on regions where the signs of $x_1-x_2$ and $y_1-y_2$ are fixed, its minimum value over the continuous domain $0 \leq x_i \leq a$, $0 \leq y_i \leq 2b$ occurs at a corner of the rectangle $[0,a]\times[0,2b]$. Evaluating at all corners yields that the minimum nonzero value of the right-hand side of (\ref{equ::7}) equals $2a+2b = t$, and this value is attained only when $(x_1,y_1) = (a,2b)$ and $(x_2,y_2) = (0,0)$, or $(x_2,y_2) = (a,2b)$ and $(x_1,y_1) = (0,0)$. These combinations correspond to $S$ being exactly $V(H_{r,t}^1)$ or $V(H_{r,t}^2)$. For any other choice of $(x_i,y_i)$, the right-hand side of (\ref{equ::7}) exceeds $t$. In particular, for any essential edge-cut the quantity in (\ref{equ::7}) cannot take any positive value strictly smaller than $t$. Consequently, we have $|[S,V(G_{r,t}) \setminus S]| \geq t$, and the equality holds only when $S$ is $V(H_{r,t}^1)$ or $V(H_{r,t}^2)$. However, if $S$ is $V(H_{r,t}^1)$ or $V(H_{r,t}^2)$, then $|[S,V(G_{r,t}) \setminus S]| = t$, which contradicts the assumption that $|X| < t$. This contradiction shows that no essential edge-cut of $G_{r,t}$ with size smaller than $t$ exists, and therefore $\lambda'(G_{r,t}) = t$.
\end{proof}

Next we determine the second largest eigenvalue of $G_{r,t}$. Before doing so, let us recall an important tool, namely the Quotient Interlacing Theorem.
Let $P = \{V_1, \ldots, V_s\}$ be a partition of the vertex set of a graph $G$ into $s$ non-empty subsets. The quotient matrix $Q$ corresponding to $P$ is the $s \times s$ matrix whose $(i,j)$-entry $Q_{i,j}\ (1 \leq i, j \leq s)$ is the average number of neighbors in $V_j$ of the vertices in $V_i$. In other words, $Q_{i,j} = \frac{|[V_i, V_j]|}{|V_i|}$ if $i \neq j$, and $Q_{i,i} = \frac{2|E(G[V_i])|}{|V_i|}$. If, for any $1 \leq i, j \leq s$, each vertex in $V_i$ has exactly $Q_{i,j}$ neighbors in $V_j$, then $P$ is called an \emph{equitable partition} of $G$.

\begin{thm}[{\cite[Quotient Interlacing Theorem]{brouwer11}}]
\label{thm::2.1}
Let $G$ be a graph and $Q$ the quotient matrix corresponding to a partition $P$ of $V(G)$. Then the eigenvalues of $Q$ interlace the eigenvalues of $G$.
\end{thm}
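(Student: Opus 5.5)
The plan is to reduce the statement to the classical Cauchy interlacing theorem for compressions of a symmetric matrix. Interlacing here means that, with $n=|V(G)|$, $s=|P|$, and eigenvalues of $Q$ ordered $\theta_1\ge\cdots\ge\theta_s$, we have $\lambda_i(G)\ge\theta_i\ge\lambda_{n-s+i}(G)$ for $1\le i\le s$.

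First I rewrite $Q$ in matrix form. Let $A=A(G)$, and let $S$ be the $n\times s$ characteristic matrix of $P$, whose $j$th column is the indicator vector of $V_j$. Then $S^{T}S=D:=\mathrm{diag}(|V_1|,\ldots,|V_s|)$, which is invertible because each $V_j$ is non-empty. The $(i,j)$ entry of $S^{T}AS$ is $|[V_i,V_j]|$ for $i\ne j$ and $2|E(G[V_i])|$ for $i=j$. Hence
\[
Q=D^{-1}S^{T}AS .
\]

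Next I symmetrize $Q$. Put $C=SD^{-1/2}$. Then $C^{T}C=D^{-1/2}S^{T}SD^{-1/2}=I_s$, so the columns of $C$ are orthonormal. Let $B=C^{T}AC$, which is real symmetric. A direct computation gives
\[
B=D^{1/2}\,\bigl(D^{-1}S^{T}AS\bigr)\,D^{-1/2}=D^{1/2}QD^{-1/2}.
\]
Thus $Q$ is similar to $B$, so $Q$ has real eigenvalues and they coincide with those of $B$. It therefore suffices to show that the eigenvalues of $B=C^{T}AC$ interlace those of $A$ whenever $C^{T}C=I$.

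This last step is the core of the argument, and I will prove it with the Courant--Fischer min-max theorem. For the upper bound $\lambda_i(A)\ge\lambda_i(B)$:
\begin{itemize}
\item Let $u_1,\ldots,u_s$ be orthonormal eigenvectors of $B$.
\item Take a nonzero $y\in\mathrm{span}\{u_1,\ldots,u_i\}$ with $Cy$ orthogonal to the first $i-1$ eigenvectors of $A$; such $y$ exists by a dimension count.
\item Since $C$ is an isometry, $\|Cy\|=\|y\|$, so the Rayleigh quotients agree: $\frac{(Cy)^{T}A(Cy)}{(Cy)^{T}(Cy)}=\frac{y^{T}By}{y^{T}y}$.
\item This common value is at most $\lambda_i(A)$, because $Cy$ is orthogonal to the top $i-1$ eigenvectors of $A$.
\item It is at least $\lambda_i(B)$, because $y$ lies in the span of the top $i$ eigenvectors of $B$.
\end{itemize}
The lower bound $\lambda_i(B)\ge\lambda_{n-s+i}(A)$ follows by applying the same argument to $-A$ and $-B$ and reindexing. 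I expect no real obstacle; the only points needing care are the dimension count and the observation that the non-symmetric $Q$ has real spectrum through its similarity to $B$.
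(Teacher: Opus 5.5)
Your proposal is correct and follows essentially the standard argument of the cited source (Brouwer--Haemers); the paper itself quotes the theorem without proof. That argument writes $Q=D^{-1}S^{T}AS$, notes that $D^{1/2}QD^{-1/2}=C^{T}AC$ where $C=SD^{-1/2}$ has orthonormal columns, proves Cauchy interlacing for $C^{T}AC$ by the Rayleigh-quotient and dimension-count argument, and obtains the lower bounds $\theta_j\ge\lambda_{n-s+j}(A)$ by passing to $-A$ and $-B$, exactly as you do.
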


In the special case when $P$ is an equitable partition, all eigenvalues of $Q$ are eigenvalues of $G$ and the spectral radius of $Q$ equals the spectral radius of $G$ (see \cite{brouwer11,godsil01} for details).

\begin{lem}
\label{thm::2.2}
Let $r$ and $t$ be integers with $6\leq r \leq t \leq 2r - 3$ such that $r$ is odd when $t$ is odd. Then for any $G_{r,t} \in {\mathcal G}_{r,t}$ we have
\[
\lambda_2(G_{r,t}) = 
\begin{cases} 
\frac{r-7+\sqrt{(r+7)^2-8t-32}}{2}, & \text{if } t-r \text{ is odd,} \\
 \frac{r-6+\sqrt{(r+6)^2-8t-32}}{2}, & \text{if } t-r \text{ is even.}
\end{cases}
\]
\end{lem}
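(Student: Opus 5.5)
The plan is to compute the spectrum of $G_{r,t}$ by splitting $\mathbb{R}^{V(G_{r,t})}$ into the span of the characteristic vectors of a natural equitable partition and its orthogonal complement. I give the details for $t-r$ odd; the even case is the same computation with different parameters. As in the proof of Lemma \ref{pro::2.3}, write $a=t-r-1$, $b=\frac{2r-t+2}{2}$, and let $A_i,B_i$ be the vertex sets of $\overline{C_a}$ and $\overline{bK_2}$ in the $i$th copy $H^i_{r,t}$. Then $a+2b=r+1$, vertices in $A_i$ have $a-3$ neighbours in $A_i$, and vertices in $B_i$ have $2b-2$ neighbours in $B_i$.

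\textbf{Quotient eigenvalues.} The partition $\{A_1,B_1,A_2,B_2\}$ is equitable, with quotient matrix
\[
Q=\begin{pmatrix} a-3 & 2b & 2 & 0\\ a & 2b-2 & 0 & 1\\ 2 & 0 & a-3 & 2b\\ 0 & 1 & a & 2b-2\end{pmatrix}.
\]
By the swap symmetry $1\leftrightarrow 2$, $Q$ splits into a symmetric block and an antisymmetric block.
\begin{itemize}
\item The symmetric block is $\begin{pmatrix} a-1 & 2b\\ a & 2b-1\end{pmatrix}$, with eigenvalues $r$ and $-1$.
\item The antisymmetric block is $\begin{pmatrix} a-5 & 2b\\ a & 2b-3\end{pmatrix}$, with trace $r-7$ and determinant $-3a-10b+15=2t-7r+8$.
\end{itemize}
The discriminant of the antisymmetric block is $(r-7)^2-4(2t-7r+8)=(r+7)^2-8t-32$. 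So its larger eigenvalue is exactly $\rho(r,t)$ in the odd case. These four values are eigenvalues of $G_{r,t}$ because the partition is equitable.

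\textbf{Remaining eigenvalues.} Every other eigenvector $x$ can be taken orthogonal to the four characteristic vectors, so $x$ sums to zero on each of $A_1,B_1,A_2,B_2$. I bound the Rayleigh quotient $x^{T}A(G_{r,t})x/x^{T}x$ block by block.
\begin{itemize}
\item The join edges between $A_i$ and $B_i$ contribute $0$, since they are given by all-ones blocks.
\item On $A_i$, the adjacency matrix is $J-I-A(C_a)$. Its quadratic form on sum-zero vectors is at most $(-1+2)\|x_{A_i}\|^2$.
\item On $B_i$, the adjacency matrix is $J-I-A(bK_2)$, giving at most $(-1+1)\|x_{B_i}\|^2=0$.
\item The $2$-regular bipartite edges between $A_1$ and $A_2$ contribute at most $2(\|x_{A_1}\|^2+\|x_{A_2}\|^2)$, since the largest eigenvalue of a $2$-regular graph is $2$.
\item The perfect matching between $B_1$ and $B_2$ contributes at most $\|x_{B_1}\|^2+\|x_{B_2}\|^2$.
\end{itemize}
Adding these, every remaining eigenvalue is at most $3$.

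In the even case the parts are $A_i$, the complement of $\frac{t-r}{2}K_2$, and $B_i=K_{2r-t}$. The same computation gives bounds $1+2=3$ on the $A$-parts and $-1+1=0$ on the $B$-parts. The antisymmetric quotient block is then $\begin{pmatrix} t-r-4 & 2r-t\\ t-r & 2r-t-2\end{pmatrix}$, with trace $r-6$ and determinant $2t-6r+8$. Its larger eigenvalue is $\frac{r-6+\sqrt{(r+6)^2-8t-32}}{2}$, as required.

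\textbf{Conclusion.} It remains to check that $\rho(r,t)\ge 3$, which also gives $\rho(r,t)>-1$. Then $\lambda_2(G_{r,t})=\rho(r,t)$.
\begin{itemize}
\item In the odd case, $\rho\ge 3$ amounts to $\sqrt{(r+7)^2-8t-32}\ge 13-r$. This is trivial for $r\ge 13$. Otherwise, squaring gives the condition $5r-t\ge 19$, which holds because $t\le 2r-3$ and $r\ge 6$ give $5r-t\ge 3r+3\ge 21$.
\item In the even case, the analogous condition is $\sqrt{(r+6)^2-8t-32}\ge 12-r$, which reduces to $36r-8t\ge 140$. This follows in the same way from $t\le 2r-3$ and $r\ge6$.
\end{itemize}

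I expect the main obstacle to be this orthogonal-complement bound. The cross edges between $A_1$ and $A_2$ form an arbitrary $2$-regular bipartite graph, so the eigenvectors there cannot be computed exactly and only a Rayleigh-quotient estimate is available. The plan relies on that crude estimate of $3$ being below $\rho$, which the inequality check above confirms in the range $6\le r\le t\le 2r-3$.
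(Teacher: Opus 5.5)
Your proof is correct and follows the same route as the paper. Both use the equitable partition $\{A_1,B_1,A_2,B_2\}$, whose antisymmetric quotient block yields $\rho(r,t)$, show that every eigenvector orthogonal to the four characteristic vectors has eigenvalue at most $3$, and check that $\rho(r,t)\ge 3$. Your Rayleigh-quotient bound, taken edge class by edge class, justifies the $3$ more carefully than the paper's somewhat informal eigenvector argument. In the even case two small details are off: the within-$A_i$ bound is actually $0$, and the symmetric block's second eigenvalue is $0$ rather than $-1$. Both only loosen bounds you already have and change nothing.
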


\begin{proof}
We only prove the result for odd $t-r$. The proof for even $t-r$ is similar and hence is omitted.

Assume that $t-r$ is odd. Denote by $H_1 \vee H_2$ and $H_3 \vee H_4$ the two vertex-disjoint copies of $H_{r,t}$ in $G_{r,t}$, where $H_1$ and $H_3$ are copies of $\overline{C_{t-r-1}}$, and $H_2$ and $H_4$ are copies of $\overline{\frac{2r-t+2}{2}K_{2}}$. By the definition of $G_{r,t}$, $\{V(H_1), V(H_2), V(H_3), V(H_4)\}$ is an equitable partition of $G_{r,t}$ with quotient 
matrix 
\[
Q = \begin{pmatrix}
t - r - 4  & 2r - t + 2 & 2 & 0 \\
t - r - 1  & 2r - t     & 0 & 1 \\
2 & 0  & t - r - 4  & 2r - t + 2 \\
0 & 1  & t - r - 1  & 2r - t
\end{pmatrix}.
\]
The eigenvalues of $Q$ are $r$, $-1$ and $\frac{r-7\pm\sqrt{(r+7)^2-8t-32}}{2}$. These are eigenvalues of $G_{r,t}$ as $Q$ is the quotient matrix of an equitable partition. The lifted eigenvectors corresponding to these four eigenvalues form a basis for a subspace of $\mathbb{R}^{2r+2}$. The remaining eigenvectors can be chosen orthogonal to this subspace and hence orthogonal to the characteristic vectors of the four parts. Since these eigenvectors are orthogonal to the characteristic vectors of each part, the sums of the coordinates in every part are zero; hence the edges inside each part act as zero on this subspace, so deleting these edges does not change the eigen-equation. Due to the specific structure of $G_{r,t}$, these remaining eigenvectors are also eigenvectors of the graph obtained by deleting all edges among the four parts, which is the induced subgraphs $H_1, H_2, H_3, H_4$ of $G_{r,t}$.

We now analyze the spectra of $H_1, H_2, H_3$ and $H_4$. Since $H_1$ and $H_3$ are isomorphic to $\overline{C_{t-r-1}}$, their eigenvalues lie in the interval $[-3, 1]$, with the largest eigenvalue being $t-r-4$, and the nontrivial eigenvalues (those orthogonal to the all-one vector) are at most $1$. Since $H_2$ and $H_4$ are isomorphic to $\overline{\frac{2r-t+2}{2}K_2}$, their spectra are $\left\{(2r-t)^{(1)}, 0^{\left(\frac{2r-t+2}{2}\right)}, (-2)^{\left(\frac{2r-t}{2}\right)}\right\}$, with $0$ being the largest nontrivial eigenvalue.

The nontrivial eigenvalues of $H_1$ and $H_3$ are at most $1$. For a nontrivial eigenvalue $\lambda$ of $H_1$ with eigenvector $\mathbf{x}$, the vector $(a\mathbf{x}, \mathbf{0}, b\mathbf{x}, \mathbf{0})^T$ yields eigenvalues $\lambda \pm \gamma$ with $|\gamma| \le 2$, hence $\lambda \pm \gamma \leq3$. The nontrivial eigenvalues of $H_2$ and $H_4$ are at most $0$. Using the perfect matching between $H_2$ and $H_4$, the vectors $(\mathbf{0}, \mathbf{y}, \mathbf{0}, \pm\mathbf{y})^T$ give eigenvalues $\eta \pm 1 \le 1$.

All eigenvalues of $G_{r,t}$ arise from the above constructions or are among the four eigenvalues of $Q$. From what we proved above, the eigenvalues from the subgraph constructions are bounded above by $\max\{3, 1\} = 3$. On the other hand, the largest eigenvalue of $Q$ is $r$ (which is also the largest eigenvalue of $G$), and the second largest eigenvalue of $Q$ is $\frac{r-7+\sqrt{(r+7)^2-8t-32}}{2}$, which is greater than $3$ as $6\leq r \leq t \leq 2r-3$ by our assumption. Therefore, the second largest eigenvalue of $G_{r,t}$ is $\lambda_2(G_{r,t}) = \frac{r-7+\sqrt{(r+7)^2-8t-32}}{2}$. This completes the proof.
\end{proof}

\section{Proof of Theorem \ref{thm::1.4}}
\label{sec:pf}

\renewcommand\proofname{\bf Proof of Theorem \ref{thm::1.4}}
\begin{proof}
Let $t$ and $r$ be integers with $6 \leq r \leq t$. Let $G$ be a connected $r$-regular graph with $\lambda'(G) \leq t$. We aim to prove $\lambda_2(G)\geq \rho(r,t)$, where $\rho(r,t)$ is as defined in \eqref{rho}. 

Since $\lambda'(G) \leq t$, there exists a vertex subset $S \subseteq V(G)$ such that $l := |[S, \overline{S}]| \leq t$. Set $s = |S|$ and let $s' = |\overline{S}|$. Then the quotient matrix of the partition $\{S, \overline{S}\}$ of $V(G)$ is
\begin{equation}
	\label{eq:Q}
Q = \left( \begin{array}{cc}
r - \frac{l}{s} & \frac{l}{s} \\[6pt]
\frac{l}{s'} & r - \frac{l}{s'}
\end{array} \right).
\end{equation}
The eigenvalues of $Q$ are $r$ and $r - \frac{l}{s} - \frac{l}{s'}$. So, by Theorem \ref{thm::2.1},
\begin{align}
\lambda_2(G) \geq \lambda_2(Q) = r - \frac{l}{s} - \frac{l}{s'}.\label{equ::1}
\end{align}

If $l \leq t - 1$, then by (\ref{equ::1}) and Lemma \ref{pro::2.2}, for even $l-r$ we have
\[
\lambda_2(G) \geq r - \frac{2l}{r} \geq r - \frac{2(t - 1)}{r} >\frac{r-6+\sqrt{(r+6)^2-8t-32}}{2},
\]
and for odd $l-r$ we have
\[
\lambda_2(G) \geq r - \frac{2l}{r+1} \geq r - \frac{2(t - 1)}{r+1} >\frac{r-7+\sqrt{(r+7)^2-8t-32}}{2}.
\]
Thus, $\lambda_2(G)>\rho(r,t)$ if $l \leq t - 1$.

In the remaining proof we assume that $l=t$, that is, $|[S, \overline{S}]|=t$. Since $l = t$, by (\ref{equ::1}) we have
\begin{align}
\lambda_2(G) \geq \lambda_2(Q) = r - \frac{t}{s} - \frac{t}{s'}.\label{equ::A}
\end{align}
 
{\flushleft\bf Claim 1.} Assume that $t-r$ is odd. If $s=s'=r+1$, then $\lambda_2(G)\geq r-\frac{2t}{r+1}$. If $s\geq r+2$ or $s'\geq r+2$, then $\lambda_2(G)>\rho(r,t)$.

{\flushleft\bf Proof.}
Assume that $t-r$ is odd. Since $|[S,\overline S]|=t$, Lemma~\ref{pro::2.2} implies that $s,s'\ge r+1$. Obviously, if $s=s'=r+1$, then by \eqref{equ::A},  $\lambda_2(G) \geq r - \frac{2t}{r+1}$. We now prove that if either $s$ or $s'$ is no less than $r+2$, then $\lambda_2(G)>\rho(r,t)$. 
Suppose that $s \ge r+2$. Since $s'\geq r+1$, by \eqref{equ::A} we have
\begin{align}
\lambda_2(G) \geq \lambda_2(Q) \geq r - \frac{t}{r+2} - \frac{t}{r+1}.\label{equ::3}
\end{align}
Note that $6 \le r \le t\leq 2r-3$. Define
\begin{align*}
f(r,t) &=r-\frac{t}{r+2}-\frac{t}{r+1}-\frac{r-7+\sqrt{(r+7)^2-8t-32}}{2}\\
&=\frac{r+7}{2}-t\Bigl(\frac{1}{r+2}+\frac{1}{r+1}\Bigr)-\frac{\sqrt{(r+7)^2-8t-32}}{2}\\
&=\frac{C}{2}-tD-\frac{\sqrt{C^2-8t-32}}{2},
\end{align*}
where we set
\[
C=r+7,\qquad D=\frac{1}{r+2}+\frac{1}{r+1}.
\]
We claim that $f(r,t)>0$ for all real numbers $r,t$. In fact, since $6\le r\le t\le 2r-3$ by Lemma \ref{pro::2.1}, one can verify that $C-2tD>0$. Hence
$f(r,t)>0\;\Leftrightarrow\; C-2tD > \sqrt{C^2-8t-32}
\;\Leftrightarrow\; (C-2tD)^2 > C^2-8t-32 \;\Leftrightarrow\; 4D^2t^2-4CDt+8t+32 > 0\;\Leftrightarrow\; D^2t^2+(2-CD)t+8 > 0.$
Consider the quadratic polynomial $p(x)=D^2x^2+(2-CD)x+8$. It has leading coefficient $D^2>0$ and discriminant
\[
\Delta=(2-CD)^2-32D^2=\frac{(-11r-17)^2-32(2r+3)^2}{(r+1)^2(r+2)^2}
=\frac{-7r^2-10r+1}{(r+1)^2(r+2)^2} < 0
\]
as $r\ge 6$. Hence $p(x) > 0$ for all real number $x$. This together with the arguments above implies that $f(r,t) > 0$ for all $t$ and $r$ with $6 \leq r \leq t \leq 2r - 3$. Combining this with (\ref{equ::3}), we obtain that
\[
\lambda_2(G) \geq \lambda_2(Q) \geq r - \frac{t}{r+2}- \frac{t}{r+1} > \frac{r-7+\sqrt{(r+7)^2-8t-32}}{2}.
\]
Thus, $\lambda_2(G)>\rho(r,t)$ if $s \geq r+2$. Similarly, if $s' \geq r+2$, then $\lambda_2(G)>\rho(r,t)$. This completes the proof of Claim 1.

{\flushleft\bf Claim 2.} Assume that $t-r$ is even. If $s=s'=r$, then $\lambda_2(Q)\geq r-\frac{2t}{r}$. If $s\geq r+1$ or $s'\geq r+1$, then $\lambda_2(G)>\rho(r,t)$.

{\flushleft\bf Proof.}
Assume that $t-r$ is even. Since $|[S,\overline S]|=t$,
Lemma~\ref{pro::2.2} implies that $s,s'\ge r$. Clearly, if $s=s'=r$, then by \eqref{equ::A}, $\lambda_2(Q)\geq r-\frac{2t}{r}$. We now prove that if $s\geq r+1$ or $s'\geq r+1$, then $\lambda_2(G)>\rho(r,t)$. Suppose that $s\geq r+1$. Since $s' \geq r$, by \eqref{equ::A} we have
\begin{align}
\lambda_2(G) \geq \lambda_2(Q) \geq r - \frac{t}{r+1} - \frac{t}{r}.\label{equ::2}
\end{align}
Note that $6 \le r \le t$. Define
\begin{align*}
f(r,t) & = r - \frac{t}{r+1} - \frac{t}{r} - \frac{r-6 + \sqrt{(r+6)^2 - 8t - 32}}{2} \\
 & = \frac{r+6}{2} - t\left(\frac{1}{r+1} + \frac{1}{r}\right) - \frac{\sqrt{(r+6)^2 - 8t - 32}}{2} \\
 & = \frac{C}{2} - tD - \frac{\sqrt{C^2 - 8t - 32}}{2},
\end{align*}
where we set
$$
C = r + 6\ \text{ and }\ D = \dfrac{1}{r+1} + \dfrac{1}{r}.
$$ 
We claim that $f(r,t)>0$ for all $r$ and $t$ with $6 \le r \le t$. In fact, since $6 \le r \le t$, we have $C - 2tD > 0$. Similarly to the proof of Claim 1, we see that $f(r,t) > 0 \Leftrightarrow D^2 t^2 + (2 - CD)t + 8 > 0$. The quadratic polynomial $p(x) = D^2 x^2 + (2 - CD)x + 8$ has leading coefficient $D^2 > 0$ and discriminant
$$
\Delta = (2 - CD)^2 - 32D^2=\frac{(11r+6)^2 - 32(2r+1)^2}{r^2(r+1)^2} = \frac{-7r^2 + 4r + 4}{r^2(r+1)^2} < 0
$$ 
as $r \ge 6$. Hence $p(x) > 0$ for all real number $x$. This together with the arguments above implies that $f(r,t) > 0$ for all $t$ and $r$ with $6 \leq r \leq t \leq 2r - 3$. Combining this with (\ref{equ::2}), we obtain that
\[
\lambda_2(G) \geq \lambda_2(Q) \geq r - \frac{t}{r+1}- \frac{t}{r} > \frac{r-6+\sqrt{(r+6)^2-8t-32}}{2}.
\]
Thus, $\lambda_2(G)>\rho(r,t)$ if $s \geq r+1$. Similarly, if $s' \geq r+1$, then $\lambda_2(G)>\rho(r,t)$. This completes the proof of Claim 2.

\textbf{Case 1.} $t = s$.

In this case, we have $t=r$ or $t=r+1$ by Claims 1 and 2. If $t=r$, then $H_{r,t}=K_r$ in Construction \ref{const1}, so $G$ is formed by taking two copies of $K_r$ and adding a perfect matching between them. If $t=r+1$, then $H_{r,t}=\overline{\frac{r+1}{2}K_2}$ in Construction \ref{const1}, so $G$ is obtained by taking two copies of $\overline{\frac{r+1}{2}K_2}$ and adding a perfect matching between them. Thus, when $t = s$, we have $G\cong G_{r,t}$ and $\lambda_2(G)=\rho(r,t)$ by Lemma \ref{thm::2.2}.

\textbf{Case 2.} $t \geq s+1$.

Since $s=s'$ by Claims 1 and 2, we may write 
$$
S = \{u_1, u_2, \ldots, u_s\},\; \overline{S} = \{v_1, v_2, \ldots, v_s\}.
$$ 
Define a subset $W$ of $V(G)$ as follows: Whenever a vertex $u_i \in S$ and a vertex $v_j \in \overline{S}$ are adjacent in $G$ such that $v_j$ is the only neighbor of $u_i$ in $\overline{S}$ and $u_i$ is the only neighbor of $v_j$ in $S$, we put both $u_i$ and $v_j$ into $W$. Set 
$$
U_1 = S \setminus W,\; U_2=S\cap W
$$ 
and 
$$
U_3 = \overline{S} \setminus W,\; U_4=\overline{S}\cap W.
$$
Of course, $\{U_1, U_2, U_3, U_4\}$ is a partition of $V(G)$ (see Fig. \ref{fig1}). Set $|U_i| = n_i$ for $1\leq i\leq 4$. Then $n_2=n_4=s-n_1$ by the definition of $W$. Since $s=s'$, it follows that $n_3=n_1$. Note that $n_1 \leq t-r$. Moreover, by Lemma \ref{pro::2.2}, if $t-r$ is odd, then $n_1 \leq t-r-1$. 

\begin{figure}[http]
\centering
\begin{tikzpicture}[x=1.00mm, y=1.00mm, inner xsep=0pt, inner ysep=0pt, outer xsep=0pt, outer ysep=0pt]
\path[line width=0mm] (31.93,48.36) rectangle +(114.66,51.62);
\definecolor{L}{rgb}{0,0,0}
\definecolor{F}{rgb}{0,0,0}
\path[line width=0.15mm, draw=L, fill=F] (43.07,97.48) circle (0.50mm);
\path[line width=0.15mm, draw=L, fill=F] (43.07,81.94) circle (0.50mm);
\path[line width=0.15mm, draw=L, fill=F] (85.81,97.48) circle (0.50mm);
\path[line width=0.15mm, draw=L, fill=F] (85.81,81.94) circle (0.50mm);
\path[line width=0.15mm, draw=L] (43.20,97.37) -- (86.32,97.37);
\path[line width=0.15mm, draw=L] (43.00,81.83) -- (85.93,81.83);
\path[line width=0.15mm, draw=L] (43.20,97.37) -- (43.20,82.41);
\path[line width=0.15mm, draw=L] (85.93,97.56) -- (85.93,82.02);
\path[line width=0.15mm, draw=L, fill=F] (101.35,97.48) circle (0.50mm);
\path[line width=0.15mm, draw=L, fill=F] (101.35,81.94) circle (0.50mm);
\path[line width=0.15mm, draw=L, fill=F] (144.08,97.48) circle (0.50mm);
\path[line width=0.15mm, draw=L, fill=F] (144.08,81.94) circle (0.50mm);
\path[line width=0.15mm, draw=L] (101.47,97.37) -- (144.59,97.37);
\path[line width=0.15mm, draw=L] (101.28,81.83) -- (144.20,81.83);
\path[line width=0.15mm, draw=L] (101.47,97.37) -- (101.47,82.41);
\path[line width=0.15mm, draw=L] (144.20,97.56) -- (144.20,82.02);
\path[line width=0.15mm, draw=L, fill=F] (43.07,66.40) circle (0.50mm);
\path[line width=0.15mm, draw=L, fill=F] (43.07,50.86) circle (0.50mm);
\path[line width=0.15mm, draw=L, fill=F] (85.81,66.40) circle (0.50mm);
\path[line width=0.15mm, draw=L, fill=F] (85.81,50.86) circle (0.50mm);
\path[line width=0.15mm, draw=L] (43.20,66.29) -- (86.32,66.29);
\path[line width=0.15mm, draw=L] (43.00,50.75) -- (85.93,50.75);
\path[line width=0.15mm, draw=L] (43.20,66.29) -- (43.20,51.33);
\path[line width=0.15mm, draw=L] (85.93,66.48) -- (85.93,50.94);
\path[line width=0.15mm, draw=L, fill=F] (101.35,66.40) circle (0.50mm);
\path[line width=0.15mm, draw=L, fill=F] (101.35,50.86) circle (0.50mm);
\path[line width=0.15mm, draw=L, fill=F] (144.08,66.40) circle (0.50mm);
\path[line width=0.15mm, draw=L, fill=F] (144.08,50.86) circle (0.50mm);
\path[line width=0.15mm, draw=L] (101.47,66.29) -- (144.59,66.29);
\path[line width=0.15mm, draw=L] (101.28,50.75) -- (144.20,50.75);
\path[line width=0.15mm, draw=L] (101.47,66.29) -- (101.47,51.33);
\path[line width=0.15mm, draw=L] (144.20,66.48) -- (144.20,50.94);
\path[line width=0.15mm, draw=L, fill=F] (45.92,86.84) circle (0.50mm);
\path[line width=0.15mm, draw=L, fill=F] (49.69,88.24) circle (0.50mm);
\path[line width=0.15mm, draw=L, fill=F] (53.68,86.84) circle (0.50mm);
\path[line width=0.15mm, draw=L, fill=F] (57.64,87.60) circle (0.50mm);
\path[line width=0.15mm, draw=L, fill=F] (61.45,86.84) circle (0.50mm);
\path[line width=0.15mm, draw=L, fill=F] (80.88,86.84) circle (0.50mm);
\path[line width=0.15mm, draw=L, fill=F] (104.11,86.84) circle (0.50mm);
\path[line width=0.15mm, draw=L, fill=F] (109.18,87.92) circle (0.50mm);
\path[line width=0.15mm, draw=L, fill=F] (113.93,86.09) circle (0.50mm);
\path[line width=0.15mm, draw=L, fill=F] (119.65,86.84) circle (0.50mm);
\path[line width=0.15mm, draw=L, fill=F] (139.07,86.84) circle (0.50mm);
\path[line width=0.15mm, draw=L, fill=F] (45.92,60.98) circle (0.50mm);
\path[line width=0.15mm, draw=L, fill=F] (49.91,60.01) circle (0.50mm);
\path[line width=0.15mm, draw=L, fill=F] (54.12,60.01) circle (0.50mm);
\path[line width=0.15mm, draw=L, fill=F] (57.25,60.98) circle (0.50mm);
\path[line width=0.15mm, draw=L, fill=F] (61.45,60.98) circle (0.50mm);
\path[line width=0.15mm, draw=L, fill=F] (80.88,60.98) circle (0.50mm);
\path[line width=0.09mm, draw=L, fill=F] (65.16,73.91) circle (0.30mm);
\path[line width=0.09mm, draw=L, fill=F] (67.32,73.91) circle (0.30mm);
\path[line width=0.09mm, draw=L, fill=F] (69.47,73.91) circle (0.30mm);
\path[line width=0.15mm, draw=L, fill=F] (104.11,60.98) circle (0.50mm);
\path[line width=0.15mm, draw=L, fill=F] (109.18,60.12) circle (0.50mm);
\path[line width=0.15mm, draw=L, fill=F] (114.11,59.69) circle (0.50mm);
\path[line width=0.15mm, draw=L, fill=F] (119.65,60.98) circle (0.50mm);
\path[line width=0.15mm, draw=L, fill=F] (139.07,60.98) circle (0.50mm);
\path[line width=0.09mm, draw=L, fill=F] (126.72,73.91) circle (0.30mm);
\path[line width=0.09mm, draw=L, fill=F] (128.87,73.91) circle (0.30mm);
\path[line width=0.09mm, draw=L, fill=F] (131.03,73.91) circle (0.30mm);
\path[line width=0.15mm, draw=L] (104.19,86.74) -- (104.19,60.98);
\path[line width=0.15mm, draw=L] (109.15,87.92) -- (109.15,60.23);
\path[line width=0.15mm, draw=L] (113.89,85.98) -- (113.89,60.23);
\path[line width=0.15mm, draw=L] (119.71,86.74) -- (119.71,61.20);
\path[line width=0.15mm, draw=L] (139.21,86.63) -- (139.21,60.98);
\draw(53.11,92.66) node[anchor=base west]{\fontsize{8.54}{10.24}\selectfont $U_1$};
\draw(128.55,92.66) node[anchor=base west]{\fontsize{8.54}{10.24}\selectfont $U_2$};
\draw(128.87,54.62) node[anchor=base west]{\fontsize{8.54}{10.24}\selectfont $U_4$};
\draw(53.44,54.41) node[anchor=base west]{\fontsize{8.54}{10.24}\selectfont $U_3$};
\path[line width=0.15mm, draw=L] (45.89,86.74) -- (45.89,61.20);
\path[line width=0.15mm, draw=L] (49.88,88.14) -- (45.89,61.09);
\path[line width=0.15mm, draw=L] (49.77,88.14) -- (54.08,60.01);
\path[line width=0.15mm, draw=L] (49.88,59.90) -- (53.65,86.84);
\path[line width=0.15mm, draw=L] (57.21,60.87) -- (57.64,87.71);
\path[line width=0.15mm, draw=L] (61.20,86.95) -- (49.88,59.69);
\path[line width=0.15mm, draw=L] (61.30,86.74) -- (61.20,61.09);
\path[line width=0.15mm, draw=L] (81.02,86.95) -- (81.02,61.09);
\path[line width=0.15mm, draw=L] (81.02,86.63) -- (61.20,60.76);
\draw(64.75,92.55) node[anchor=base west]{\fontsize{8.54}{10.24}\selectfont $n_1$};
\draw(117.02,92.45) node[anchor=base west]{\fontsize{8.54}{10.24}\selectfont $n_2$};
\draw(116.91,54.51) node[anchor=base west]{\fontsize{8.54}{10.24}\selectfont $n_4$};
\draw(64.97,54.30) node[anchor=base west]{\fontsize{8.54}{10.24}\selectfont $n_3$};
\draw(33.93,88.14) node[anchor=base west]{\fontsize{8.54}{10.24}\selectfont $S$};
\draw(33.93,57.96) node[anchor=base west]{\fontsize{8.54}{10.24}\selectfont $\overline{S}$};
\path[line width=0.24mm, draw=L] (88.67,89.86) -- (98.05,89.86);
\path[line width=0.24mm, draw=L, fill=F] (88.67,89.86) -- (90.07,89.51) -- (88.67,89.86) -- (90.07,90.21) -- (88.67,89.86) -- cycle;
\path[line width=0.24mm, draw=L, fill=F] (98.05,89.86) -- (96.65,90.21) -- (98.05,89.86) -- (96.65,89.51) -- (98.05,89.86) -- cycle;
\path[line width=0.24mm, draw=L] (88.35,57.85) -- (98.27,57.85);
\path[line width=0.24mm, draw=L, fill=F] (88.35,57.85) -- (89.75,57.51) -- (88.35,57.85) -- (89.75,58.20) -- (88.35,57.85) -- cycle;
\path[line width=0.24mm, draw=L, fill=F] (98.27,57.85) -- (96.87,58.20) -- (98.27,57.85) -- (96.87,57.51) -- (98.27,57.85) -- cycle;
\draw(92.23,91.91) node[anchor=base west]{\fontsize{8.54}{10.24}\selectfont $k$};
\draw(92.45,60.55) node[anchor=base west]{\fontsize{8.54}{10.24}\selectfont $p$};
\path[line width=0.15mm, draw=L] (57.64,87.60) -- (45.89,60.66);
\end{tikzpicture}%
\caption{$|[S, \overline{S}]|=t$}
\label{fig1}
\end{figure}

Set 
$$
k = |[U_1, U_2]|,\; p = |[U_3, U_4]|.
$$ 
Then the quotient matrix of the partition $\{U_1, U_2, U_3, U_4\}$ of $V(G)$ is
\begin{equation}
\label{eq:T}
T =
\begin{pmatrix}
r - \frac{k}{n_1}-\frac{t-n_2}{n_1} & \frac{k}{n_1} & \frac{t-n_2}{n_1} & 0 \\
\frac{k}{n_2} & r - \frac{k}{n_2}-1 & 0 & 1 \\
\frac{t-n_2}{n_1} & 0 & r - \frac{p}{n_1}-\frac{t-n_2}{n_1} & \frac{p}{n_1} \\
0 & 1 & \frac{p}{n_2} & r - \frac{p}{n_2}-1
\end{pmatrix}.
\end{equation}
A straightforward computation shows that the characteristic polynomial of $T$ is given by $f(x)=(x-r)P(x)$, where 
\begin{equation}
\begin{split}\label{equ::8}
P(x)=&-n_1^2n_2^2(x-r)^3+[-rn_1n_2(k+p)+2n_1^2n_2^2(n_2-n_1-t)](x-r)^2+\\
&[-kpr^2-n_2(k+p)(n_1^2-n_2^2+t(2n_1+n_2))+4n_1n_2^2(n_2-t)](x-r)+\\
&[2rn_2(n_2-t)(k+p)-2kptr].
\end{split}
\end{equation} 
Let $x_{\max}(k)$ denote the largest real root of $P(x)=0$. Then 
\begin{equation}
\label{eq:GTk}
\lambda_2(G)\geq \lambda_2(T)=x_{\max}(k).
\end{equation}

{\flushleft\bf Claim 3.} $\lambda_2(T)$ is minimized if and only if $k=p=n_1n_2$.

{\flushleft\bf Proof.}
All parameters appeared in the cubic polynomial $P(x)$ are positive integers, and they satisfy
\[
r \le n_1+n_2 \le r+1 \le t,\quad 1\le p\le n_1n_2,\quad 1\le k\le n_1n_2.
\]
We claim that the largest real root $x_{\max}(k)$ of $P(x)=0$ is a strictly decreasing function of $k$.
To prove this, we define 
$$
F(y, z)=G(y)+zH(y),
$$ 
where
\begin{align*}
G(y) &= n_1^2 n_2^2 \, y^3 + \bigl[\, r n_1 n_2 p - 2 n_1^2 n_2^2 (n_2 - n_1 - t) \bigr] y^2 \\
     &\quad + \bigl[\, n_2 p \bigl( n_1^2 - n_2^2 + t(2n_1 + n_2) \bigr) - 4 n_1 n_2^2 (n_2 - t) \bigr] y - 2 r n_2 (n_2 - t) p
\end{align*}
and 
\begin{align*}
H(y) &= r n_1 n_2 \, y^2 + \bigl[\, p r^2 + n_2 \bigl( n_1^2 - n_2^2 + t(2n_1 + n_2) \bigr) \bigr] y + \bigl[\, -2 r n_2 (n_2 - t) + 2 p t r \bigr].
\end{align*}
Then $P(x) = -F(x-r,\,k)$, so the equation $P(x)=0$ becomes $F(x-r,k)=0$. Let $y_0=y_0(z)$ be the largest real root of the cubic equation $F(y,z)=0$ in $y$. Then
\begin{equation}
\label{eq:xmax}
x_{\max}(k)=y_0(k)+r.
\end{equation}
Fix $z>0$ and consider $F(y,z)$ as a cubic polynomial in $y$.
Since its leading coefficient is $n_1^2n_2^2>0$, we have $F(y,z)\to +\infty$ as $y\to +\infty$. If $F_y(y_0,z)=0$, then $y_0$ is a multiple root of $F(y,z)=0$, so $F(y,z)$ does not change sign when crossing $y_0$.
But for $y>y_0$ close to $y_0$, we must have $F(y,z)>0$ because $F(y,z)\to +\infty$ as $y\to +\infty$.
Hence $y_0$ cannot be a multiple (with even multiplicity) rightmost root, and therefore $F_y(y_0,z)\neq 0$.
Moreover, as $F(y,z)$ tends to $+\infty$ as $y\to +\infty$, the rightmost root is crossed from negative to positive, giving $F_y(y_0,z)>0$. By the implicit function theorem, we have
\begin{equation}
\label{eq:dy0}
\frac{d y_0}{d z} = -\frac{F_z(y_0,z)}{F_y(y_0,z)}=-\frac{H(y_0)}{F_y(y_0,z)}.
\end{equation}
Since $F_y(y_0,z)>0$, the sign of $\frac{d y_0}{d z}$ is opposite to the sign of $H(y_0)$.
Thus, to prove that $y_0$ is a strictly decreasing function of $z$, it remains to prove that $H(y_0)>0$.

We now prove that $H(y_0)>0$. The quadratic polynomial $H(y)$ has leading coefficient $rn_1n_2>0$.
Its constant term is
\[
-2r n_2 (n_2 - t) + 2p t r
      =2r\bigl(p t + n_2(t-n_2)\bigr)>0
\]
because $t\ge r+1\ge n_1+n_2\ge n_2$ and $p\ge 1$.
Moreover, the coefficient $pr^2 + n_2(n_1^2-n_2^2+t(2n_1+n_2))$ of the linear term of $H(y)$ is positive. So, if $H(y)=0$ has real roots, they must both be negative, which we denote by $y_1<y_2<0$; if $H(y)=0$ has no real root, then $H(y)>0$ for all $y$ and hence $H(y_0)>0$ as desired.

Consider the case when $H(y)=0$ has real roots $y_1<y_2<0$. Evaluating $F$ at $y_i$ gives $F(y_i,z)=G(y_i)$ as $H(y_i)=0$. Using $t\ge r+1$ and $p\le n_1n_2$,  one can check that
$G(y_i)>0 \; \text{for } i=1,2.$
Hence $F(y_i,z)>0$. Consider the cubic equation $F(y,z)=0$ in $y$.
If it has only one real root, then $\lim_{y\to -\infty}F(y,z)=-\infty$ and $F(y_1,z)>0$
forces the unique real root to lie in $(-\infty,y_1)$, so $y_0<y_1$.
Since $H(y)>0$ for $y\in(-\infty,y_1)$, we get $H(y_0)>0$ as desired.
If the cubic equation $F(y,z)=0$ in $y$ has three real roots, let them be $y_a\le y_b\le y_0$.
Since $\lim_{y\to-\infty}F(y,z)=-\infty$ and $F(y_1,z)>0$, by the intermediate value theorem there exists a root $y_a\in(-\infty,y_1)$. This accounts for one of the three real roots. The remaining two roots $y_b$ and $y_0$ (with $y_b\le y_0$) must both lie in $(y_1,+\infty)$. We now show that in fact $y_b, y_0\in(y_2,+\infty)$, which gives $y_0>y_2$. Since $F(y_1,z)>0$ and $F(y_2,z)>0$, and $F$ is continuous, $F$ does not change sign on $[y_1,y_2]$ (for if it did, there would be a root in $(y_1,y_2)$, and the sign of $F$ would alternate around this root, contradicting $F(y_1,z)>0$ and $F(y_2,z)>0$). Hence $F(y,z)>0$ for all $y\in[y_1,y_2]$, so neither $y_b$ nor $y_0$ lies in $[y_1,y_2]$. Since $y_b,y_0>y_1$ and $y_b,y_0\notin(y_1,y_2]$, we conclude that $y_b,y_0\in(y_2,+\infty)$, and in particular $y_0>y_2$. Since $H(y)>0$ for $y\in(y_2,\infty)$, it follows that $H(y_0)>0$ as desired.

Now that we have proved $H(y_0)>0$, by \eqref{eq:dy0} and $F_y(y_0,z)>0$ we obtain that $\frac{d y_0}{d z}<0$ and hence $y_0(z)$ decreases with $z$. This together with \eqref{eq:xmax} implies that $x_{\text{max}}(k)$ decreases with $k$. Consequently, when $k$ is maximized, $\lambda_2(T) = x_{\text{max}}(k)$ (see \eqref{eq:GTk}) is minimized. Since $k \leq n_1n_2$, it follows that $\lambda_2(T)$ is minimized when $k = n_1n_2$. Similarly, we can prove that $\lambda_2(T)$ is minimized when $p = n_1n_2$. This completes the proof of Claim 3.

{\flushleft\bf Claim 4.} If $k=p=n_1n_2$, then
\begin{equation}
\label{eq:2T}
\lambda_2(T)=\frac{2r n_1 - n_1^2 - n_1 n_2 - 2t - 2n_1 + 2n_2 + \sqrt{N}}{2n_1},
\end{equation}
where 
\begin{equation}
\label{eq:N}
\begin{aligned}
	N = n_1^4 &+ 2n_1^3 n_2 + n_1^2 n_2^2 - 4t n_1^2 + 4t n_1 n_2 + 4n_1^3 \\
	&- 4n_1 n_2^2 + 4t^2 - 8t n_1 - 8t n_2 + 4n_1^2 + 8n_1 n_2 + 4n_2^2.
\end{aligned}
\end{equation}

{\flushleft\bf Proof.}
Suppose that $k=p=n_1n_2$. Then the quotient matrix of the partition $\{U_1, U_2, U_3, U_4\}$ of $V(G)$ (see \eqref{eq:T}) becomes 
\begin{equation}
\label{eq:T1}
T =
\begin{pmatrix}
r - n_2-\frac{t-n_2}{n_1} & n_2 & \frac{t-n_2}{n_1} & 0 \\
n_1 & r - n_1-1 & 0 & 1 \\
\frac{t-n_2}{n_1} & 0 & r - n_2-\frac{t-n_2}{n_1} & n_2 \\
0 & 1 & n_1 & r - n_1-1
\end{pmatrix}.
\end{equation}
Let 
$P=\begin{pmatrix}
I & I \\
I & - I 
\end{pmatrix}$, where $I$ is the $2\times2$ identity matrix. Then
$$P^{-1}TP=\begin{pmatrix}
r - n_2 & n_2 & 0 & 0  \\
n_1 & r - n_1 & 0 & 0  \\
0 & 0  & r - n_2-\frac{2(t-n_2)}{n_1} & n_2 \\
0 & 0  & n_1 & r - n_1-2
\end{pmatrix}=\begin{pmatrix}
A+B & \mathbf{0}  \\
\mathbf{0} & A-B
\end{pmatrix},$$
where 
$$
A=\begin{pmatrix}
r - n_2-\frac{t-n_2}{n_1} & n_2 \\
n_1 & r - n_1-1 
\end{pmatrix},\quad \;
B=\begin{pmatrix}
\frac{t-n_2}{n_1} & 0 \\
0 & 1
\end{pmatrix}
$$ 
and $\mathbf{0}$ is the $2\times2$ zero matrix.
Therefore, the eigenvalues of $T$ are the union of the eigenvalues of $A+B$ and $A-B$. By computing the characteristic polynomials of $A+B$ and $A-B$, we obtain that the eigenvalues of $T$ are $r$, $r-n_1-n_2$ and $\frac{2r n_1 - n_1^2 - n_1 n_2 - 2t - 2n_1 + 2n_2 \pm \sqrt{N}}{2n_1}$, where $N$ is as shown in \eqref{eq:N}. Therefore, $\lambda_2(T)$ is as given in \eqref{eq:2T}. This completes the proof of Claim 4.

{\flushleft\bf Claim 5.} If $t-r$ is odd, then $\lambda_2(T)$ in \eqref{eq:2T} is minimized when $n_1$ is maximized.

{\flushleft\bf Proof.}
Since $t-r$ is odd, we have $n_1+n_2=r+1$. Set $R = r + 1$. Then $t > R = n_1 + n_2$. Substituting $n_2 = R - n_1$ into \eqref{eq:2T} and \eqref{eq:N}, we obtain that
$$
\lambda_2(T) = \frac{(r-5)n_1 + 2r + 2 - 2t + \sqrt{N}}{2n_1}
$$
and
$$
N = (R^2 + 8R - 8t)n_1^2 + 4R(t-R)n_1 + 4(t-R)^2.
$$

Consider the function
$$
g(y)=\frac{f(y)}{2y},
$$
where $f(y)=(r-5)y + 2r + 2 - 2t + \sqrt{N(y)}$ and $N(y)=(R^2 + 8R - 8t)y^2 + 4R(t-R)y + 4(t-R)^2$.
Clearly, $\lambda_2(T)=g(n_1)$. We have
$$
\frac{dg}{dy} = \frac{1}{2y^2} \left(y \frac{df}{dy} - f(y)\right) = \frac{1}{2y^2} \cdot \frac{-B y - 2C - 2D\sqrt{N(y)}}{2\sqrt{N(y)}},
$$
where $B = 4R(t-R) > 0$, $C = 4(t-R)^2 > 0$, and $D = 2R - 2t = -2(t-R) < 0$. Since $[R y + 2(t-R)]^2 - N = 8(t-R) y^2 > 0$, we have $\sqrt{N(y)} < R y + 2(t-R)$, which is equivalent to $-B y - 2C - 2D\sqrt{N(y)} < 0$. Therefore, $\frac{dg}{dy} < 0$. Thus, $g(y)$ is strictly decreasing with $y$. Since $\lambda_2(T)=g(n_1)$, it follows that $\lambda_2(T)$ is strictly decreasing with $n_1$. This proves Claim 5.

{\flushleft\bf Claim 6.} If $t-r$ is even, then $\lambda_2(T)$ in \eqref{eq:2T} is minimized when $n_1$ is maximized.

{\flushleft\bf Proof.}
Since $t-r$ is even, we have $n_1+n_2=r$. Then $t > r = n_1 + n_2$. Substituting $n_2 = r - n_1$ into \eqref{eq:2T} and \eqref{eq:N}, we obtain that
\[
\lambda_2(T) = \frac{(r-5)n_1 + 2r - 2t + \sqrt{N}}{2n_1}
\]
and
\[
N = (r^2 + 8r - 8t)n_1^2 + 4r(t-r)n_1 + 4(t-r)^2.
\]

Consider the function
\[
q(y)=\frac{h(y)}{2y},
\]
where $h(y)=(r-5)y + 2r - 2t + \sqrt{N(y)}$ and $N(y)=(r^2 + 8r - 8t)y^2 + 4r(t-r)y + 4(t-r)^2$.
Clearly, $\lambda_2(T)=g(n_1)$. We have
\[
\frac{dq}{dy} = \frac{1}{2y^2} \left(y \frac{dh}{dy} - h(y)\right) = \frac{1}{2y^2} \cdot \frac{-B y - 2C - 2D\sqrt{N(y)}}{2\sqrt{N(y)}},
\]
where $B = 4r(t-r) > 0$, $C = 4(t-r)^2 > 0$, and $D = 2r - 2t = -2(t-r) < 0$. Since $[r y + 2(t-r)]^2 - N = 8(t-r) y^2 > 0$, we have $\sqrt{N(y)} < r y + 2(t-r)$, which is equivalent to $-B y - 2C - 2D\sqrt{N(y)} < 0$. Therefore, $\frac{dq}{dy} < 0$. Thus, $q(y)$ is strictly decreasing with $y$. Since $\lambda_2(T)=g(n_1)$, it follows that $\lambda_2(T)$ is strictly decreasing with $n_1$. This proves Claim 6.

Finally, we can prove that $\lambda_2(G)\geq \rho(r,t)$. Consider first the case when $t-r$ is odd. In this case we have  $n_1, n_3 \leq t-r-1$. So, by Claim 5, $\lambda_2(T)$ is minimized when $n_1 = n_3 = t-r-1$. Note that when $n_1 = n_3 = t-r-1$ the matrix $T$ in \eqref{eq:T1} becomes
\[
T = 
\begin{pmatrix}
	t-r-4 & 2r-t+2 & 2 & 0 \\
	t-r-1 & 2r-t & 0 & 1 \\
	2 & 0 & t-r-4 & 2r-t+2 \\
	0 & 1 & t-r-1 & 2r-t
\end{pmatrix}.
\]
The second largest eigenvalue of this matrix is $\lambda_2(T) = \frac{r-7+\sqrt{(r+7)^2-8t-32}}{2} =  \rho(r,t)$, where $\rho(r,t)$ is as defined in \eqref{rho}. Thus, by Claims 3 and 5, we have $\lambda_2(G) \geq \rho(r,t)$. 

Now we consider the case when $t-r$ is even. In this case we have $n_1, n_3\leq t-r$. So, by Claim 6, $\lambda_2(T)$ is minimized when $n_1 = n_3 = t-r$. Note that when $n_1 = n_3 = t-r$ the matrix $T$ in \eqref{eq:T1} becomes
\[
T = 
\begin{pmatrix}
t-r-2 & 2r-t & 2 & 0 \\
t-r & 2r-t-1 & 0 & 1 \\
2 & 0 & t-r-2 & 2r-t \\
0 & 1 & t-r & 2r-t-1
\end{pmatrix}.
\]
The second largest eigenvalue of this matrix is $\lambda_2(T) = \frac{r-6+\sqrt{(r+6)^2-8t-32}}{2} = \rho(r,t)$, where $\rho(r,t)$ is as defined in \eqref{rho}. Thus, by Claims 3 and 6, we have $\lambda_2(G) \geq \rho(r,t)$. 

In summary, we have proved that $\lambda_2(G)\geq \rho(r,t)$ for any connected $r$-regular graph $G$ with $\lambda'(G) \leq t$. Since by Lemma \ref{thm::2.2}, $G_{r,t}$ is a connected $r$-regular graph with $\lambda'(G_{r,t}) = t$ and $\lambda_2(G_{r,t}) = \rho(r,t)$, this bound for $\lambda_2(G)$ is sharp. This completes the proof.
\end{proof}

From the above proof of Theorem \ref{thm::1.4}, we see that the bound $\lambda_2(G) \geq \rho(r,t)$ is achieved only when the following conditions hold: (i) if $t-r$ is odd, then $n_1 = n_3 = t-r-1$ and $p = k = (t-r-1)(2r-t+2)$; (ii) if $t-r$ is even, then $n_1 = n_3 = t-r$ and $p = k = (t-r)(2r-t)$. The extremal graphs $G_{r,t}$ defined in Construction \ref{const1} satisfy these conditions and attain the bound $\lambda_2(G) \geq \rho(r,t)$. There are many other connected $r$-regular graphs satisfying conditions (i) and (ii), but we do not believe that all of them have the second largest eigenvalue $\rho(r, t)$. It would be interesting to give, for any integers $6\leq r\leq t \leq2r-3$, a characterization of connected $r$-regular graphs $G$ satisfying $\lambda_2(G) = \rho(r, t)$.

\section*{Acknowledgement}
The first author is supported by China Scholarship Council and Autonomous Region Graduate Research Innovation Project (No. XJ2026G012).


\begin{thebibliography}{99}
	

\bibitem{Abdi02}
M. Abdi, E. Ghorbani, Minimum algebraic connectivity and maximum diameter: Aldous-Fill and Guiduli-Mohar conjectures, \emph{J. Combin. Theory Ser. B} \textbf{167} (2024) 164--188.

\bibitem{Abdi03}
M. Abdi, E. Ghorbani, W. Imrich, Regular graphs with minimum spectral gap, \emph{European J. Combin.} \textbf{95} (2021), 103328.

\bibitem{Abiad01}
A. Abiad, B. Brimkov, X. Martínez-Rivera, O. Suil, J. Zhang, Spectral bounds for the connectivity of regular graphs with given order, \emph{Electron. J. Linear Algebra} \textbf{34} (2018), 428--443.

\bibitem{Aldous02}
D. Aldous, J. Fill, Reversible Markov chains and random walks on graphs, University of California, Berkeley, 2002,
available at \url{https://www.stat.berkeley.edu/~aldous/RWG/book.html}.

\bibitem{AM} 
N. Alon, V. D. Milman. $\lambda_1$, isoperimetric inequalities for graphs, and superconcentrators, \emph{J. Combin. Theory Ser. B} \textbf{38} (1) (1985), 73--88. 

\bibitem{deAbreu01}
N.M.M. de Abreu, Old and new results on algebraic connectivity of graphs, \emph{Linear Algebra Appl.} \textbf{423} (2007), 53--73.


\bibitem{brouwer11}
A.E. Brouwer, W.H. Haemers, \emph{Spectra of Graphs}, Springer, New York, 2011.

\bibitem{Chandran01} 
S.L. Chandran, Minimum cuts, girth and spectral threshold, \emph{Inform. Process. Lett.} \textbf{89} (2004), 105--110.

\bibitem{cioaba10}
S.M. Cioab\u{a}, Eigenvalues and edge-connectivity of regular graphs, \emph{Linear Algebra Appl.} \textbf{432} (2010), 458--470.

\bibitem{J.E} 
J. Ekstein, B. Wu, L.M. Xiong, Connected even factors in the square of essentially $2$-edge-connected graph, \emph{Electron. J. Combin.} \textbf{24} (3) (2017), Paper 3.42, 9 pp.

\bibitem{fiedler01} 
M. Fiedler, Algebraic connectivity of graphs, \emph{Czechoslovak Math. J.} \textbf{23} (1973), 298--305.

\bibitem{X.} 
X.F. Gu, Packing spanning trees and spanning $2$-connected $k$-edge-connected essentially $(2k\!-\!1)$-edge-connected subgraphs, \emph{J. Combin. Optim.} \textbf{33} (2017), 924--933.

\bibitem{X.F.} 
X.F. Gu, R.R. Liu, G.X. Yu, Spanning tree packing and $2$-essential edge-connectivity, \emph{Discrete Math.} \textbf{346} (2023), 113132.

\bibitem{godsil01}
C. Godsil, G. Royle, \emph{Algebraic Graph Theory}, in: Graduate Texts in Mathematics, Vol. 207, Springer-Verlag, New York, 2001.


\bibitem{hoory90}
S. Hoory, N. Linial, A. Wigderson, Expander graphs and their applications, \emph{Bull. Amer. Math. Soc.} \textbf{43} (2006), 439--561.

\bibitem{kirkland}
S. Kirkland, J.J. Molitierno, M. Neumann, B.L. Shader, On graphs with equal algebraic and vertex connectivity, \emph{Linear
Algebra Appl.} \textbf{341} (2002), 45--56.

\bibitem{N.K} 
N. Kothari, M.H. de Carvalho, C.L. Lucchesi, C.H.C. Little. On essentially $4$-edge-connected cubic bricks, \emph{Electron. J. Combin.} \textbf{27} (1) (2020), 22 pp.

\bibitem{krivelevich}
M. Krivelevich, B. Sudakov, Pseudo-random graphs, more sets, in: Graphs and Numbers, \textbf{15} (2006), 199--262.

\bibitem{H-J.} 
H.-J. Lai, J.A. Li, Packing spanning trees in highly essentially connected graphs, \emph{Discrete Math.} \textbf{342} (2019), 1--9.

\bibitem{F.L.} 
F.L. Lu, X. Feng, Y. Wang, $b$-invariant edges in essentially $4$-edge-connected near-bipartite cubic bricks, \emph{Electron. J. Combin.} \textbf{27} (1) (2020), Paper 1.55, 10 pp.

\bibitem{M} 
B. Mohar, Isoperimetric numbers of graphs, \emph{J. Combin. Theory Ser. B} \textbf{47} (3) (1989), 274--291.

\bibitem{mohar01} 
B. Mohar, Some applications of Laplace eigenvalues of graphs, in: G. Hahn, G. Sabidussi (Eds.), \emph{Graph Symmetry:
Algebraic Methods and Applications} \textbf{497} (1997), 225--275.

\bibitem{O10}
S. O, The second largest eigenvalue and vertex-connectivity of regular multigraphs, \emph{Discrete Appl. Math.} \textbf{279} (2020), 118--124.

\bibitem{O90}
S. O, J.R. Park, J. Park, W.Q. Zhang, Sharp spectral bounds for the edge-connectivity of regular graphs, \emph{European J. Combin.} \textbf{110} (2023), 103713.

\bibitem{rad01} 
A.A. Rad, M. Jalili, M. Hasler, A lower bound for algebraic connectivity based on the connection-graph-stability
method, \emph{Linear Algebra Appl.} \textbf{435} (2011), 186--192.

\bibitem{Wu01} 
C.W. Wu, Algebraic connectivity of directed graphs, \emph{Linear Multilinear Algebra} \textbf{53} (2005), 203--223.

\bibitem{J.Q.} 
J.Q. Xu, Z.H. Chen, H.J. Lai, M. Zhang, Spanning trails in essentially $4$-edge-connected graphs, \emph{Discrete Appl. Math.} \textbf{162} (2014), 306--313.

\end{thebibliography}
\end{document}